\documentclass[a4paper,11pt]{article}

\usepackage{amsmath,amssymb,amsthm,mathtools,bm}
\usepackage[margin=1in]{geometry}
\usepackage{booktabs,array}
\usepackage{graphicx}
\usepackage[ruled,vlined]{algorithm2e}
\usepackage{xcolor}
\usepackage{hyperref}
\hypersetup{colorlinks=true,linkcolor=blue,citecolor=red,urlcolor=cyan}
\usepackage{subcaption}

\numberwithin{equation}{section}

\theoremstyle{plain}
\newtheorem{theorem}{Theorem}[section]
\newtheorem{proposition}[theorem]{Proposition}

\newtheorem{corollary}[theorem]{Corollary}
\theoremstyle{definition}
\newtheorem{definition}[theorem]{Definition}
\newtheorem{assumption}[theorem]{Assumption}
\theoremstyle{remark}
\newtheorem{remark}[theorem]{Remark}

\newcommand{\R}{\mathbb{R}}

\newcommand{\bx}{\mathbf{x}}
\newcommand{\by}{\mathbf{y}}

\newcommand{\bu}{\mathbf{u}}

\newcommand{\bX}{\mathbf{X}}

\newcommand{\bW}{\mathbf{W}}
\newcommand{\be}{\mathbf{e}}
\newcommand{\bn}{\mathbf{n}}
\newcommand{\ba}{\mathbf{a}}

\newcommand{\bA}{\mathbf{A}}
\newcommand{\bB}{\mathbf{B}}
\newcommand{\bC}{\mathbf{C}}
\newcommand{\bD}{\mathbf{D}}
\newcommand{\bF}{\mathbf{F}}

\newcommand{\bI}{\mathbf{I}}

\newcommand{\bQ}{\mathbf{Q}}
\newcommand{\bR}{\mathbf{R}}

\newcommand{\bzero}{\mathbf{0}}

\newcommand{\bDelta}{\boldsymbol{\Delta}}
\newcommand{\bSigma}{\boldsymbol{\Sigma}}

\newcommand{\bPsi}{\boldsymbol{\Psi}}

\newcommand{\etaVec}{\boldsymbol{\eta}}

\newcommand{\E}{\mathbb{E}}
\newcommand{\mean}{\mathbf{m}}
\newcommand{\Cov}{\operatorname{Cov}}
\newcommand{\Var}{\operatorname{Var}}

\newcommand{\lmax}{\lambda_{\max}}

\newcommand{\flow}[2]{\Phi_{#1}^{#2}}
\newcommand{\FTLE}{\operatorname{FTLE}}

\title{A Moment-Based Eulerian Method for Variance-Based Finite-Time Lyapunov Exponent Computation in Stochastic Flows}
\author{Shingyu Leung}
\date{}

\begin{document}
\maketitle

\begin{abstract}
Variance-based finite-time Lyapunov exponents (vFTLEs) provide a stochastic analogue of deterministic FTLE by measuring the covariance of stochastic arrival locations.  Existing PDF-based formulations compute this covariance by solving a Fokker--Planck equation for each initial point, which becomes expensive when the diagnostic is required on a dense grid.  In this work, we develop a moment-based Eulerian approximation to vFTLE in the small-noise regime.  Starting from a stochastic trajectory expansion about the deterministic flow, we derive a closed covariance equation for the leading stochastic displacement.  By embedding this trajectory-wise covariance dynamics into physical space, we obtain an Eulerian transport--reaction equation for a symmetric covariance tensor field.  The covariance associated with each initial point is recovered by evaluating this tensor field at the deterministic arrival location, and a moment-based vFTLE is then defined from its largest eigenvalue.

The proposed method replaces a family of Fokker--Planck solves by the evolution of a single covariance tensor field, requiring only \(d(d+1)/2\) scalar fields in \(d\) dimensions.  It also retains directional information through the eigenvectors of the covariance tensor, allowing the dominant directions of stochastic spreading to be visualized.  We establish the leading-order consistency of the method with PDF-based vFTLE in the small-noise limit, clarify its relation to scalar stochastic sensitivity, and show how the same covariance equation connects process-noise spreading with deterministic deformation.  In particular, deterministic FTLE is recovered, up to an additive constant, from an isotropic initial covariance when no process noise is present, while continuous process noise produces a time-integrated deformation covariance.  Numerical examples include a linear verification problem with an exact covariance solution, an isotropic double-gyre benchmark, and a double-gyre flow with spatially and temporally varying anisotropic diffusion.  The results demonstrate that the proposed covariance moment formulation captures the main uncertainty-induced spreading structures while substantially reducing the computational cost of vFTLE evaluation.
\end{abstract}

\section{Introduction}

Finite-time Lyapunov exponents (FTLEs) are widely used to quantify finite-time stretching and to visualize Lagrangian coherent structures in unsteady flows.  Given a deterministic velocity field \(\bu(\bx,t)\), the associated flow map \(\Phi_{t_0}^{t_0+T}\) sends an initial point \(\bx_0\) to its arrival location at time \(t_0+T\).  The classical FTLE is defined from the largest singular value of the flow-map Jacobian, and its ridge structures have been used extensively as indicators of attracting or repelling material behavior \cite{halyua00,hal01,hal02,shalekmar05,hal15}.  A substantial body of numerical work has also focused on efficient Eulerian and flow-map-based strategies for computing such diagnostics on fixed grids \cite{leu11,leu13,youleu18,youleu20,lywn19}.

In many applications, however, the velocity field is not known exactly.  Uncertainty may arise from measurement noise, unresolved scales, stochastic forcing, model reduction, interpolation error, or sparse and incomplete observations.  These considerations have motivated uncertainty-aware extensions of FTLE-type diagnostics, including variance-based indicators and Fokker--Planck formulations for stochastic flows \cite{sfrs12,youleu21}.  In the PDF-based setting, each initial point is associated with a probability density describing the distribution of possible stochastic arrival locations.  The variance-based FTLE, or vFTLE, then uses the covariance matrix of this arrival density to measure the dominant magnitude and direction of stochastic spreading.  In this sense, vFTLE is a second-moment analogue of deterministic FTLE: rather than measuring the separation of neighboring deterministic trajectories, it measures how uncertainty in the dynamics spreads an initially localized distribution.

The main computational difficulty is that the PDF-based formulation is expensive.  For each initial grid point \(\bx_0\), one must evolve a Fokker--Planck equation initialized by a Dirac mass at \(\bx_0\), and the covariance used in vFTLE is then extracted from the resulting density at the final time.  Even when operator splitting, adaptive activation, sparse subsampling, or parallel computation is used \cite{ngyouleu22}, the cost is dominated by the need to solve many full density evolution problems.  If the spatial grid has \(M\) points and the diagnostic is desired at all initial grid points, the full PDF-based approach requires, in principle, a family of \(M\) density solves.

This observation motivates the central question of this paper: if the vFTLE diagnostic ultimately depends only on the covariance of the stochastic arrival distribution, is it necessary to compute the full probability density?  We show that, in the small-noise regime, the answer is negative.  Starting from the stochastic dynamics \(d\bX_t=\bu(\bX_t,t)\,dt+\varepsilon\bB(\bX_t,t)\,d\bW_t\), we use the expansion \(\bX_t=\Phi_{t_0}^{t}(\bx_0)+\varepsilon\etaVec_t+O(\varepsilon^2)\), where \(\etaVec_t\) is the leading stochastic displacement about the deterministic trajectory.  The covariance of \(\etaVec_t\) satisfies a closed matrix equation along deterministic characteristics.  By rewriting this characteristic covariance equation in physical space, we obtain the Eulerian covariance transport equation
\begin{equation}
    \partial_t \bQ+\bu\cdot\nabla\bQ
    =
    (\nabla\bu)\bQ+\bQ(\nabla\bu)^T+\bD,
    \qquad
    \bQ(\bx,t_0)=\bzero,
    \label{eq:eulerian_Q_intro}
\end{equation}
where \(\bD=\bB\bB^T\) is the diffusion tensor.  Once \(\bQ\) is computed, the covariance associated with the initial point \(\bx_0\) is obtained by evaluating this Eulerian tensor field at the deterministic arrival point, namely \(\bC_T(\bx_0)=\bQ(\Phi_{t_0}^{t_0+T}(\bx_0),t_0+T)\).  The proposed moment-based vFTLE is then defined by \(\mu_{t_0}^{t_0+T}(\bx_0)=|T|^{-1}\log\sqrt{\lambda_{\max}(\varepsilon^2\bC_T(\bx_0))}\).

The resulting method replaces the evolution of many probability density functions by the evolution of a single symmetric covariance tensor field.  In \(d\) spatial dimensions, this requires only \(d(d+1)/2\) scalar fields.  Thus, in two dimensions, the method evolves three scalar covariance components, and in three dimensions it evolves six.  The reduction is substantial when the diagnostic is required over a dense set of initial conditions.  Moreover, because the full covariance tensor is retained, the method provides not only a scalar moment-vFTLE field but also the dominant directions of stochastic spreading through the eigenvectors of \(\bC_T\).

The proposed method should be understood as a leading-order moment approximation to PDF-based vFTLE, not as a replacement for the full Fokker--Planck description in all regimes.  It is most appropriate when the noise amplitude is small and the second moment of the arrival distribution is the primary quantity of interest.  If the full density is needed, if the arrival distribution is strongly non-Gaussian or multimodal, or if higher-order uncertainty information is important, then the Fokker--Planck formulation remains more informative.  The purpose of the present work is instead to provide a computationally efficient covariance-based approximation in settings where vFTLE is used primarily as a second-moment diagnostic.  The stochastic expansion and moment interpretation used here are consistent with standard small-noise and numerical SDE viewpoints \cite{Kloeden92,Higham01}.

The main contributions of this paper are as follows.  First, we derive a leading-order covariance approximation to vFTLE from the small-noise expansion of stochastic trajectories.  Second, we formulate the covariance dynamics as an Eulerian tensor transport equation, reducing the computation to \(d(d+1)/2\) scalar fields.  Third, we combine this covariance field with deterministic flow-map evaluation to define a moment-based vFTLE and its associated dominant spreading directions.  Fourth, we clarify the relation between the proposed diagnostic, full PDF-based vFTLE, stochastic sensitivity, and deterministic deformation.  In particular, we show that deterministic FTLE is recovered from the covariance formulation when uncertainty is placed in the initial condition, and that process noise leads to a time-integrated deformation covariance.  Finally, we present numerical examples, including a linear verification problem with an exact covariance solution, an isotropic double-gyre benchmark, and a double-gyre example with spatially varying anisotropic diffusion.  These examples demonstrate both the accuracy of the covariance moment equation and the additional directional information carried by the tensor formulation.

The remainder of the paper is organized as follows.  Section~\ref{sec:background} reviews deterministic FTLE, PDF-based stochastic vFTLE, and the covariance-moment motivation.  Section~\ref{sec:proposed} presents the proposed moment-based Eulerian approach, including the small-noise expansion, the Eulerian covariance transport equation, the moment-vFTLE definition, and its relation to deterministic deformation, PDF-based vFTLE, and stochastic sensitivity.  Section~\ref{sec:numerical_method} discusses the numerical discretization and computational complexity.  Section~\ref{sec:numerical_experiments} presents the numerical examples.  Conclusions and future directions are given in Section~\ref{sec:conclusion}.

\section{Background}
\label{sec:background}

This section reviews the deterministic and stochastic ingredients needed for the proposed moment formulation.  We first recall the deterministic flow map and the classical FTLE, which provide the baseline finite-time stretching diagnostic.  We then summarize the PDF-based stochastic formulation of vFTLE, where uncertainty in the dynamics is represented by a Fokker--Planck equation and the covariance of the stochastic arrival distribution is used to define a variance-based diagnostic.  Finally, we identify the key observation motivating the present work: although the full probability density contains rich information, vFTLE depends only on its second central moment.  This suggests that, in the small-noise regime, the covariance can be evolved directly without computing the full density.

\subsection{Deterministic flow maps and finite-time Lyapunov exponents}

Let \(\Omega\subset\R^d\) be a computational domain and let \(\bu:\Omega\times[t_0,t_1]\to\R^d\), with \(t_1=t_0+T\), be a sufficiently smooth velocity field.  The deterministic trajectory starting from \(\bx_0\in\Omega\) at time \(t_0\) is governed by
\begin{equation}
    \frac{d\bx}{dt}=\bu(\bx(t),t),
    \qquad \bx(t_0)=\bx_0 .
    \label{eq:det_ode}
\end{equation}
The corresponding flow map is denoted by \(\Phi_{t_0}^{t}(\bx_0)=\bx(t;t_0,\bx_0)\).  It maps the take-off location \(\bx_0\) at time \(t_0\) to the arrival location at time \(t\).

The sensitivity of the arrival location with respect to the initial location is measured by the Jacobian matrix \(\nabla_{\bx_0}\Phi_{t_0}^{t_1}(\bx_0)\), where the gradient is taken with respect to the initial coordinate.  The associated Cauchy--Green deformation tensor is \(\bDelta_{t_0}^{t_1}(\bx_0)=(\nabla_{\bx_0}\Phi_{t_0}^{t_1}(\bx_0))^T\nabla_{\bx_0}\Phi_{t_0}^{t_1}(\bx_0)\).  The deterministic finite-time Lyapunov exponent (FTLE) is defined by
\[
    \sigma_{t_0}^{t_1}(\bx_0)=\frac{1}{|T|}\log\sqrt{\lambda_{\max}\left(\bDelta_{t_0}^{t_1}(\bx_0)\right)}.
\]
This quantity measures the maximal finite-time stretching rate of an infinitesimal perturbation initially placed at \(\bx_0\).  In many flow-visualization applications, pronounced ridges of the FTLE field are used as numerical indicators of coherent structures and transport barriers.

A direct Lagrangian computation of the FTLE usually requires releasing particles from all grid points, integrating the characteristic equations, and then differentiating the numerically obtained flow map.  This approach is conceptually simple, but it may become expensive or inaccurate when many flow maps are needed, when the integration time is long, or when the deformation gradient is highly sensitive to small perturbations.  Eulerian methods offer an alternative by computing flow maps on a fixed mesh.  One common formulation introduces a vector-valued labeling function \(\bPsi(\bx,t)\) satisfying the Liouville equation \(\bPsi_t+\bu\cdot\nabla_{\bx}\bPsi=0\), with the initial condition \(\bPsi(\bx,t_0)=\bx\).  The solution gives the backward flow map, since \(\bPsi(\by,t_1)\) is the take-off location at time \(t_0\) of the particle arriving at \(\by\) at time \(t_1\).  Forward flow maps can also be constructed by composing short-time maps using interpolation.  This Eulerian flow-map machinery is useful in the present work because the proposed moment method also requires evaluating quantities transported along deterministic characteristics.

\subsection{Stochastic dynamics and PDF-based vFTLE}

In practical applications, the velocity field is often affected by measurement noise, unresolved scales, model error, or other sources of uncertainty.  A common model for such uncertainty is the stochastic differential equation
\begin{equation}
    d\bX_t=\bu(\bX_t,t)\,dt+\varepsilon\bB(\bX_t,t)\,d\bW_t,
    \qquad \bX_{t_0}=\bx_0,
    \label{eq:sde}
\end{equation}
where \(\bX_t\in\R^d\) is the stochastic particle position, \(\bW_t\in\R^m\) is a standard Brownian motion, \(\bB(\bx,t)\in\R^{d\times m}\) describes how the noise enters the system, and \(0<\varepsilon\ll 1\) controls the noise strength.  We denote the diffusion tensor by \(\bD(\bx,t)=\bB(\bx,t)\bB(\bx,t)^T\in\R^{d\times d}\).  For a fixed initial location \(\bx_0\), the arrival location \(\bX_t\) is now a random variable rather than a single deterministic point.

Let \(p(\by,t;\bx_0,t_0)\) be the probability density function of \(\bX_t\) conditional on \(\bX_{t_0}=\bx_0\), evaluated at the arrival coordinate \(\by\in\Omega\).  Under the above stochastic model, \(p\) satisfies the Fokker--Planck equation
\begin{equation}
    \partial_t p+\nabla_{\by}\cdot(\bu(\by,t)p)
    =
    \frac{\varepsilon^2}{2}\sum_{i,j=1}^d
    \partial_{y_i y_j}^2\!\left(D_{ij}(\by,t)p\right),
    \qquad
    p(\by,t_0;\bx_0,t_0)=\delta(\by-\bx_0).
    \label{eq:fp}
\end{equation}
This equation describes the evolution of the full distribution of possible particle locations.  If the uncertainty vanishes, i.e. \(\varepsilon=0\), the density reduces formally to a Dirac mass transported by the deterministic flow.

The PDF contains several levels of information.  Its first moment gives the expected arrival location, denoted by \(\mean_{t_0}^{t_1}(\bx_0)=\int_\Omega \by\,p(\by,t_1;\bx_0,t_0)\,d\by\).  The expected-flow-map approach computes an FTLE-type diagnostic by replacing the deterministic flow map \(\Phi_{t_0}^{t_1}\) with this mean arrival map \(\mean_{t_0}^{t_1}\).  This gives a natural extension of the deterministic FTLE, but it only uses the first-order moment of the arrival distribution.  It therefore does not directly measure the spread of the stochastic particles around their mean.

To capture the spreading of the arrival distribution, one may instead use the covariance matrix
\begin{equation}
    \bSigma_{t_0}^{t_1}(\bx_0)
    =
    \int_\Omega
    \left(\by-\mean_{t_0}^{t_1}(\bx_0)\right)
    \left(\by-\mean_{t_0}^{t_1}(\bx_0)\right)^T
    p(\by,t_1;\bx_0,t_0)\,d\by .
    \label{eq:pdf_cov}
\end{equation}
The largest eigenvalue of \(\bSigma_{t_0}^{t_1}(\bx_0)\) gives the strongest principal variance direction of the stochastic arrival locations.  This motivates the variance-based FTLE, or vFTLE,
\begin{equation}
    v_{t_0}^{t_1}(\bx_0)=\frac{1}{|T|}\log\sqrt{\lambda_{\max}\left(\bSigma_{t_0}^{t_1}(\bx_0)\right)}.
    \label{eq:vftle_pdf}
\end{equation}
Compared with the expected FTLE, vFTLE uses second-order information from the PDF and is therefore more directly related to uncertainty-induced spreading.

The PDF-based formulation is mathematically natural, but it is computationally expensive.  To evaluate \(\bSigma_{t_0}^{t_1}(\bx_0)\) for all initial grid points \(\bx_0\), one needs to solve one Fokker--Planck equation for each initial location.  In two spatial dimensions, each solve is an advection--diffusion equation on the physical domain; in three dimensions, the cost is even higher.  Although operator splitting, adaptive refinement, and parallelization can reduce the cost, computing a full PDF when only its covariance is ultimately needed may still be inefficient.

\subsection{Motivation for a covariance moment equation}

The key observation behind the present work is that vFTLE depends on the Fokker--Planck solution only through the covariance matrix of the stochastic arrival location.  Therefore, if our goal is to approximate vFTLE rather than reconstruct the full arrival PDF, it is natural to ask whether the second moment can be evolved directly.  This is the motivation for introducing a moment-based Eulerian method.

The approach developed below is based on a small-noise expansion of the stochastic trajectory.  In the regime \(0<\varepsilon\ll 1\), the stochastic solution can be regarded as a perturbation of the deterministic trajectory.  Formally, one writes \(\bX_t=\Phi_{t_0}^{t}(\bx_0)+\varepsilon\etaVec_t+O(\varepsilon^2)\), where \(\etaVec_t\in\R^d\) describes the leading stochastic displacement from the deterministic path.  Substituting this expansion into the stochastic differential equation and retaining the leading-order stochastic terms gives a linear stochastic equation for \(\etaVec_t\).  Its covariance satisfies a closed deterministic matrix equation involving only the velocity gradient \(\nabla_{\bx}\bu\) and the diffusion tensor \(\bD=\bB\bB^T\).

This observation provides a significant simplification.  Instead of solving a full Fokker--Planck equation for every initial point, we solve a transport equation for a covariance tensor field on the Eulerian grid.  In two dimensions, the covariance tensor is symmetric and hence has only three independent components.  Thus the proposed method reduces the computation to three coupled scalar transport--reaction equations, followed by an evaluation along the deterministic flow map.  The resulting diagnostic is a moment-based approximation to vFTLE.  Its purpose is not to replace the full PDF description in all noise regimes, but to provide an efficient and analytically interpretable approximation when the leading covariance of the stochastic displacement is the dominant quantity of interest.

This perspective also clarifies the relationship between the proposed method and earlier Eulerian approaches.  Deterministic Eulerian FTLE methods compute finite-time deformation through flow maps.  PDF-based methods for uncertain flows compute the full distribution of stochastic arrivals and then extract first- or second-order information.  The present method sits between these two approaches: it keeps the Eulerian flow-map viewpoint, but replaces the full Fokker--Planck density by a closed covariance transport equation.  This leads to a substantially cheaper computation while retaining the second-moment information needed for variance-based flow visualization.

\section{The Proposed Moment-Based Eulerian Approach}
\label{sec:proposed}

This section develops the proposed covariance-based approximation to vFTLE.  We first derive the leading-order covariance dynamics from a small-noise expansion of stochastic trajectories.  We then rewrite the resulting characteristic covariance equation as an Eulerian transport--reaction equation for a symmetric tensor field.  Finally, we define the moment-based vFTLE and place it in context by relating it to deterministic deformation, full PDF-based vFTLE, and scalar stochastic sensitivity.

\subsection{Small-noise expansion and covariance dynamics}

For clarity of exposition, we set \(t_0=0\) in this section and write \(t_1=T\).  The extension to a general initial time \(t_0\) only changes the notation.  Throughout this section, \(\bx(t)=\flow{0}{t}(\bx_0)\) denotes the deterministic trajectory starting from \(\bx_0\), and \(\bA(t)=\nabla_{\bx}\bu(\bx(t),t)\) denotes the velocity gradient evaluated along this trajectory.

\subsubsection{Perturbation expansion}

We consider the stochastic trajectory governed by \eqref{eq:sde} and seek a small-noise expansion around the deterministic path.  Formally, we write
\begin{equation}
    \bX_t=\bx(t)+\varepsilon\etaVec_t+\varepsilon^2\boldsymbol{\zeta}_t+O(\varepsilon^3),
    \label{eq:small_noise_expansion}
\end{equation}
where \(\etaVec_t\) is the leading stochastic displacement and \(\boldsymbol{\zeta}_t\) contains the second-order correction.  Substituting this ansatz into the stochastic differential equation and expanding the drift term gives \(\bu(\bX_t,t)=\bu(\bx(t),t)+\varepsilon\bA(t)\etaVec_t+O(\varepsilon^2)\).  Since the noise term is already multiplied by \(\varepsilon\), its leading contribution is \(\varepsilon\bB(\bx(t),t)\,d\bW_t\).  Comparing terms of order \(\varepsilon\) gives the linear stochastic differential equation
\begin{equation}
    d\etaVec_t=\bA(t)\etaVec_t\,dt+\bB(\bx(t),t)\,d\bW_t,
    \qquad \etaVec_0=\bzero .
    \label{eq:linearized_eta}
\end{equation}
Thus, to leading order, the stochastic deviation from the deterministic trajectory is governed by the variational dynamics of the deterministic flow, forced by the diffusion tensor along the same trajectory.  In particular, \(\etaVec_t\) has zero mean when the initial perturbation is deterministic and zero.  Consequently, the mean of \(\bX_t\) differs from \(\bx(t)\) only at order \(\varepsilon^2\), while its covariance is of order \(\varepsilon^2\).

This expansion is the main reason a covariance-based approximation is possible.  The full Fokker--Planck equation describes the entire probability density of \(\bX_t\), but the leading contribution to the covariance of \(\bX_t\) is already determined by the linearized process \(\etaVec_t\).  Therefore, for small noise, it is sufficient to compute the covariance of \(\etaVec_t\) rather than the full density of \(\bX_t\).

\subsubsection{Covariance equation along characteristics}

Define the leading-order covariance matrix by \(\bC(t;\bx_0)=\E[\etaVec_t\etaVec_t^T]\).  Applying It\^o's product rule to \(\etaVec_t\etaVec_t^T\), using \(\bD=\bB\bB^T\), gives
\begin{equation}
    \frac{d\bC}{dt}
    =
    \bA(t)\bC+\bC\bA(t)^T+\bD(\bx(t),t),
    \qquad
    \bC(0;\bx_0)=\bzero .
    \label{eq:cov_ode}
\end{equation}
This is a nonautonomous matrix Lyapunov equation along the deterministic characteristic.  It is closed: the right-hand side involves only the deterministic trajectory, the velocity gradient, the diffusion tensor, and the covariance matrix itself.  No higher moments of the stochastic process are needed at this order.

The covariance of the original stochastic trajectory satisfies, formally, \(\Cov(\bX_t\mid \bX_0=\bx_0)=\varepsilon^2\bC(t;\bx_0)+O(\varepsilon^3)\), under the smoothness assumptions required for the small-noise expansion.  Hence \(\bC(t;\bx_0)\) captures the leading shape and orientation of the stochastic arrival cloud.  Its largest eigenvalue gives the dominant direction of stochastic spreading, while its eigenvectors describe the corresponding principal axes.

The solution of \eqref{eq:cov_ode} also has a useful representation.  Let \(\bF(t,s;\bx_0)\) be the fundamental matrix of the variational equation along the deterministic trajectory, satisfying \(\partial_t\bF(t,s;\bx_0)=\bA(t)\bF(t,s;\bx_0)\) and \(\bF(s,s;\bx_0)=\bI\).  Then
\begin{equation}
    \bC(t;\bx_0)
    =
    \int_0^t
    \bF(t,s;\bx_0)\,
    \bD(\bx(s),s)\,
    \bF(t,s;\bx_0)^T\,ds .
    \label{eq:cov_representation}
\end{equation}
This formula shows explicitly how uncertainty injected at an earlier time \(s\) is transported and stretched by the deterministic deformation from time \(s\) to time \(t\).  It also implies that \(\bC(t;\bx_0)\) remains symmetric positive semidefinite whenever \(\bD\) is symmetric positive semidefinite.  In the present work we exploit the symmetry of \(\bC\) by evolving only its independent scalar components.

\begin{remark}
If the initial particle location is itself random with covariance \(\bC_0(\bx_0)\), then the initial condition in \eqref{eq:cov_ode} should be replaced by \(\bC(0;\bx_0)=\bC_0(\bx_0)\).  In that case, the representation formula becomes
\[
    \bC(t;\bx_0)
    =
    \bF(t,0;\bx_0)\bC_0(\bx_0)\bF(t,0;\bx_0)^T
    +
    \int_0^t
    \bF(t,s;\bx_0)\bD(\bx(s),s)\bF(t,s;\bx_0)^T\,ds .
\]
Thus the same formulation can incorporate both uncertainty in the initial condition and uncertainty accumulated along the trajectory.
\end{remark}

\begin{remark}
For a linear velocity field \(\bu(\bx,t)=\bA(t)\bx+\ba(t)\) and spatially independent diffusion tensor \(\bD(t)\), equation \eqref{eq:cov_ode} gives the exact covariance of the stochastic process.  For nonlinear flows and state-dependent diffusion, it gives the leading-order covariance in the small-noise limit.  This distinction is important: the method proposed here should be interpreted as a moment approximation to vFTLE in the small-noise regime, rather than as a full replacement for the Fokker--Planck description in all stochastic regimes.
\end{remark}

\subsection{Eulerian covariance transport}

The covariance equation \eqref{eq:cov_ode} was derived along a single deterministic trajectory.  If it were solved separately for every initial point \(\bx_0\), the resulting method would still be essentially Lagrangian.  The goal of this section is to rewrite the same covariance dynamics as an Eulerian transport equation on the physical grid.  This reformulation is the main computational step that makes the proposed moment-based method substantially cheaper than the PDF-based vFTLE approach.

\subsubsection{Definition of the Eulerian covariance field}

For the moment, we continue to set \(t_0=0\).  Let \(\bx(t)=\flow{0}{t}(\bx_0)\) be the deterministic trajectory starting from \(\bx_0\).  The covariance matrix \(\bC(t;\bx_0)\) obtained in \eqref{eq:cov_ode} is naturally indexed by the initial point \(\bx_0\).  To obtain an Eulerian description, we instead store this covariance at the current physical location of the same deterministic particle.  More precisely, we define a matrix-valued field \(\bQ(\bx,t)\) by
\begin{equation}
    \bQ(\flow{0}{t}(\bx_0),t)=\bC(t;\bx_0).
    \label{eq:Q_definition}
\end{equation}
Thus \(\bQ(\bx,t)\) represents the leading-order covariance of stochastic particles whose deterministic reference trajectory arrives at \(\bx\) at time \(t\).  In this sense, \(\bQ\) is an arrival-coordinate covariance field.

Taking the material derivative of \(\bQ\) along the deterministic velocity field gives \(D\bQ/Dt=\partial_t\bQ+\bu\cdot\nabla_{\bx}\bQ\).  Since \(\bQ(\flow{0}{t}(\bx_0),t)=\bC(t;\bx_0)\), this material derivative must agree with \(d\bC/dt\) along the same characteristic.  Substituting \eqref{eq:cov_ode} then gives the Eulerian covariance transport equation
\begin{equation}
    \partial_t\bQ+\bu\cdot\nabla_{\bx}\bQ
    =
    (\nabla_{\bx}\bu)\bQ+\bQ(\nabla_{\bx}\bu)^T+\bD,
    \qquad
    \bQ(\bx,0)=\bzero .
    \label{eq:Q_PDE}
\end{equation}
This tensor-valued transport--reaction equation is the central equation of the proposed method.  The transport term moves covariance information with the deterministic flow, while the reaction terms \((\nabla_{\bx}\bu)\bQ+\bQ(\nabla_{\bx}\bu)^T\) describe the stretching and rotation of the covariance ellipsoid by the local velocity gradient.  The source term \(\bD=\bB\bB^T\) injects new stochastic variance into the system.

The important computational point is that \eqref{eq:Q_PDE} is solved only once on the Eulerian grid.  This is in sharp contrast with the Fokker--Planck formulation, where a separate density \(p(\by,t;\bx_0,0)\) must be evolved for each initial point \(\bx_0\).  Moreover, \eqref{eq:Q_PDE} evolves only the second moment, not the full probability density.  In two dimensions this means solving three coupled scalar equations, and in three dimensions six coupled scalar equations.

At the continuous level, the equation preserves the covariance structure.  If \(\bQ(\bx,0)\) is symmetric positive semidefinite and \(\bD(\bx,t)\) is symmetric positive semidefinite, then \(\bQ(\bx,t)\) remains symmetric positive semidefinite along the flow.  In the present work, we do not build a full cone-preserving integrator.  Instead, we exploit the symmetry by evolving only the independent components of \(\bQ\), and we monitor positive semidefiniteness in the numerical experiments.

\subsubsection{Recovering the covariance indexed by initial position}

The vFTLE is a diagnostic defined over initial locations.  Therefore, after solving \eqref{eq:Q_PDE}, the covariance field must be pulled back from the arrival coordinate to the initial coordinate.  For an initial point \(\bx_0\), the deterministic arrival point at time \(T\) is \(\flow{0}{T}(\bx_0)\).  Hence the covariance associated with \(\bx_0\) is
\begin{equation}
    \bC_T(\bx_0)=\bQ(\flow{0}{T}(\bx_0),T).
    \label{eq:initial_indexed_cov}
\end{equation}
This is the covariance that approximates the second central moment of the stochastic arrival distribution associated with particles released from \(\bx_0\).

This pullback step is where the existing Eulerian flow-map machinery enters the computation.  Once the forward flow map \(\flow{0}{T}\) is available on the initial grid, the value \(\bQ(\flow{0}{T}(\bx_0),T)\) can be obtained by interpolation from the Eulerian covariance field at the final time.  The resulting covariance is then used to define the moment-based vFTLE in the next section.

It is useful to emphasize the distinction between the two coordinate descriptions.  The field \(\bQ(\bx,T)\) is stored as a function of the arrival coordinate \(\bx\), while \(\bC_T(\bx_0)\) is the same covariance information indexed by the take-off coordinate \(\bx_0\).  The two are related by the deterministic flow map.  This distinction is important because coherent-structure diagnostics are usually plotted over the initial grid, whereas the Eulerian covariance equation is most naturally solved over the physical grid at the current time.

\begin{remark}[Forward and backward viewpoints]
One may alternatively define a covariance field directly over the initial coordinate by setting \(\widetilde{\bQ}(\bx_0,t)=\bQ(\flow{0}{t}(\bx_0),t)\).  This eliminates the final interpolation in \eqref{eq:initial_indexed_cov}, but the evolution is then written in the initial coordinate and loses the standard Eulerian transport form.  The formulation \eqref{eq:Q_PDE} is preferable for implementation because it is posed on the physical grid and can use existing Eulerian advection and flow-map solvers.
\end{remark}

\begin{remark}[Boundary conditions]
The covariance transport equation requires boundary treatment consistent with the deterministic transport problem.  For periodic benchmark flows, periodic boundary conditions are imposed on all components of \(\bQ\).  For bounded domains with inflow boundaries, one must prescribe covariance data on inflow portions of the boundary.  In the simplest case of deterministic particle release with no pre-existing uncertainty, this inflow covariance is set to zero.  More complicated boundary conditions may be needed when modeling uncertainty entering from outside the computational domain.
\end{remark}

\subsubsection{Component form in two dimensions}

We now write \eqref{eq:Q_PDE} in component form for \(d=2\).  Let \(\bu=(u_1,u_2)^T\), and denote the velocity gradient by
\[
    \nabla_{\bx}\bu
    =
    \begin{pmatrix}
    a & b\\
    c & d
    \end{pmatrix},
    \qquad
    a=\partial_{x_1}u_1,\quad
    b=\partial_{x_2}u_1,\quad
    c=\partial_{x_1}u_2,\quad
    d=\partial_{x_2}u_2 .
\]
Since the covariance tensor is symmetric, we write
\[
    \bQ=
    \begin{pmatrix}
    q_{11} & q_{12}\\
    q_{12} & q_{22}
    \end{pmatrix},
    \qquad
    \bD=
    \begin{pmatrix}
    D_{11} & D_{12}\\
    D_{12} & D_{22}
    \end{pmatrix}.
\]
Then \eqref{eq:Q_PDE} is equivalent to the following three scalar transport--reaction equations:
\begin{equation}
\begin{aligned}
    \partial_t q_{11}+\bu\cdot\nabla q_{11}
    &=2a q_{11}+2b q_{12}+D_{11},\\
    \partial_t q_{12}+\bu\cdot\nabla q_{12}
    &=c q_{11}+(a+d)q_{12}+b q_{22}+D_{12},\\
    \partial_t q_{22}+\bu\cdot\nabla q_{22}
    &=2c q_{12}+2d q_{22}+D_{22}.
\end{aligned}
\label{eq:component_2d}
\end{equation}
This scalar formulation is the one used in the numerical implementation.  It preserves symmetry by construction, since \(q_{21}\) is never evolved separately.  Positive semidefiniteness corresponds to the inequalities \(q_{11}\geq0\), \(q_{22}\geq0\), and \(q_{11}q_{22}-q_{12}^2\geq0\).  These inequalities are preserved by the continuous covariance equation, but not necessarily by a generic componentwise discretization.  Therefore, in the numerical experiments we check the minimum eigenvalue of \(\bQ\) as a diagnostic of whether the discretization remains consistent with the covariance interpretation.

In three dimensions, the same idea gives six scalar equations for \(q_{11},q_{12},q_{13},q_{22},q_{23}\), and \(q_{33}\).  This remains far cheaper than evolving a full probability density for each initial point.  The reduction in dimension is especially significant in three-dimensional flows, where a Fokker--Planck solve is already expensive for a single initial point, while the covariance transport equation requires only a fixed number of scalar fields independent of the number of initial particles.

\subsection{Moment-based vFTLE}

We now define the flow diagnostic obtained from the Eulerian covariance transport equation.  Recall that the covariance field \(\bQ(\bx,t)\) is stored in the current physical coordinate, whereas coherent-structure diagnostics are usually plotted over the initial coordinate.  We therefore first pull back the covariance field by the deterministic flow map and use \(\bC_T(\bx_0)=\bQ(\flow{0}{T}(\bx_0),T)\) as the leading-order covariance associated with particles released from \(\bx_0\).

\begin{definition}[Moment-based vFTLE]
Let \(\bQ\) solve \eqref{eq:Q_PDE}, and let \(\bC_T(\bx_0)\) be defined by \eqref{eq:initial_indexed_cov}.  The moment-based variance FTLE, or moment-vFTLE, is defined by
\begin{equation}
    \mu_0^T(\bx_0)
    =
    \frac{1}{|T|}
    \log
    \sqrt{
    \lmax\!\left(\varepsilon^2\bC_T(\bx_0)\right)
    } .
    \label{eq:mvftle_def}
\end{equation}
When the global factor \(\varepsilon^2\) is omitted, we define the normalized moment-vFTLE by
\begin{equation}
    \widehat{\mu}_0^T(\bx_0)
    =
    \frac{1}{|T|}
    \log
    \sqrt{
    \lmax\!\left(\bC_T(\bx_0)\right)
    } .
    \label{eq:normalized_mvftle_def}
\end{equation}
\end{definition}

If \(\varepsilon\) is a constant scalar multiplying the stochastic forcing, then \(\mu_0^T\) and \(\widehat{\mu}_0^T\) differ only by the additive constant \(|T|^{-1}\log\varepsilon\).  Hence they have the same ridge locations.  The normalized quantity \(\widehat{\mu}_0^T\) is often more convenient for visualization because it removes the trivial dependence on the global noise amplitude.  However, if the strength of the uncertainty is spatially or temporally varying and is already incorporated into \(\bD(\bx,t)\), then the diagnostic should be computed directly from the covariance generated by that diffusion tensor.

The interpretation of \(\bC_T(\bx_0)\) is different from that of the deterministic Cauchy--Green tensor.  The Cauchy--Green tensor measures the sensitivity of the deterministic arrival point with respect to changes in the initial condition.  By contrast, \(\bC_T(\bx_0)\) measures the leading-order spread of stochastic arrival locations around the deterministic trajectory starting from \(\bx_0\).  Thus \(\mu_0^T\) should be interpreted as a finite-time measure of uncertainty-induced spreading rather than as a deterministic stretching exponent.

The tensor formulation also retains directional information.  If \(\be_{\max}(\bx_0)\) is a unit eigenvector associated with \(\lmax(\bC_T(\bx_0))\), then the pair \((\lmax(\bC_T(\bx_0)),\be_{\max}(\bx_0))\) gives the dominant magnitude and direction of stochastic spreading at the final time.  This direction is expressed in the arrival coordinate, since \(\bC_T(\bx_0)\) is the covariance of the stochastic arrival cloud.  Such directional information is unavailable if one reduces the computation to a scalar indicator at the outset.

\subsection{Connection with deterministic deformation}\label{sec:deformation_connection}

The covariance formulation also provides a useful bridge between deterministic FTLE and stochastic spreading.  Deterministic FTLE measures the amplification of infinitesimal perturbations placed at the initial time, while the process-noise covariance in \eqref{eq:cov_representation} measures the cumulative amplification of uncertainty injected continuously along the trajectory.  Both effects are described by the same covariance equation, but with different sources of uncertainty.

\begin{proposition}[Recovery of deterministic FTLE from initial covariance]\label{prop:ftle_initial_covariance}
Let \(\bD\equiv \bzero\), and suppose that the initial particle location has isotropic covariance \(\bC(0;\bx_0)=\alpha\bI\), with \(\alpha>0\).  Let \(\bF(T,0;\bx_0)=\nabla_{\bx_0}\flow{0}{T}(\bx_0)\) be the deformation gradient of the deterministic flow.  Then
\begin{equation}
    \bC_T(\bx_0)
    =
    \alpha\,\bF(T,0;\bx_0)\bF(T,0;\bx_0)^T .
    \label{eq:initial_covariance_recovery}
\end{equation}
Consequently, the normalized moment-vFTLE defined from this initial covariance satisfies
\begin{equation}
    \widehat{\mu}_0^T(\bx_0)
    =
    \sigma_0^T(\bx_0)
    +
    \frac{1}{|T|}\log\sqrt{\alpha},
    \label{eq:ftle_recovery_constant}
\end{equation}
where \(\sigma_0^T\) is the deterministic FTLE.
\end{proposition}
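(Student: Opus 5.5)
The plan is to use the representation formula from the Remark on random initial data, with \(\bD\equiv\bzero\) and \(\bC_0(\bx_0)=\alpha\bI\). The integral term vanishes, leaving \(\bC(T;\bx_0)=\alpha\,\bF(T,0;\bx_0)\bF(T,0;\bx_0)^T\).

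To be self-contained rather than quoting the remark, I would verify this directly. Setting \(\bC(t)=\alpha\bF(t,0)\bF(t,0)^T\) and using \(\partial_t\bF=\bA\bF\), one gets \(\dot\bC=\bA\bC+\bC\bA^T\) and \(\bC(0)=\alpha\bI\). Uniqueness for the linear ODE \eqref{eq:cov_ode}, with \(\bD=\bzero\) and this initial condition, then identifies the solution.

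Next I would pass from \(\bC(T;\bx_0)\) to \(\bC_T(\bx_0)\). By \eqref{eq:Q_definition} and \eqref{eq:initial_indexed_cov}, \(\bC_T(\bx_0)=\bQ(\flow{0}{T}(\bx_0),T)=\bC(T;\bx_0)\). Here \(\bQ\) solves \eqref{eq:Q_PDE} with initial data \(\alpha\bI\) in place of \(\bzero\); it is well defined because \(\flow{0}{t}\) is a diffeomorphism. I also need the identification \(\bF(T,0;\bx_0)=\nabla_{\bx_0}\flow{0}{T}(\bx_0)\). This is the standard fact that the Jacobian of the flow map satisfies the variational equation \(\partial_t\nabla_{\bx_0}\flow{0}{t}=\bA(t)\nabla_{\bx_0}\flow{0}{t}\) with identity initial value, obtained by differentiating \eqref{eq:det_ode} in \(\bx_0\). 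Uniqueness then gives equality. This establishes \eqref{eq:initial_covariance_recovery}.

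For \eqref{eq:ftle_recovery_constant}, the key linear-algebra step is that \(\bF\bF^T\) and \(\bF^T\bF=\bDelta_0^T(\bx_0)\) have the same nonzero eigenvalues. Indeed, \(\bF\) is invertible (Liouville: \(\det\bF=\exp\int_0^T\nabla\cdot\bu\,ds>0\)), so the two matrices are similar via \(\bF^T\bF=\bF^{-1}(\bF\bF^T)\bF\). Hence \(\lmax(\bC_T)=\alpha\,\lmax(\bDelta_0^T)\), and taking \(|T|^{-1}\log\sqrt{\cdot}\) splits the logarithm into \(\sigma_0^T(\bx_0)+|T|^{-1}\log\sqrt\alpha\).

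I expect no real obstacle. The only points requiring care are the following:
\begin{itemize}
\item the justification that \(\bF\) coincides with the flow-map Jacobian, which needs the smoothness of \(\bu\) assumed in the background section;
\item noting that the singular-value equality holds without invertibility anyway, since \(\bF\bF^T\) and \(\bF^T\bF\) always share nonzero spectra and \(\lmax\ge0\);
\item being explicit that the zero initial condition in \eqref{eq:Q_PDE} is replaced by \(\alpha\bI\).
\end{itemize}
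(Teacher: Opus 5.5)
Your proposal is correct and follows the paper's proof: the representation formula with \(\bD\equiv\bzero\) gives \(\bC_T=\alpha\bF\bF^T\), and because \(\bF\bF^T\) and \(\bF^T\bF\) share their nonzero eigenvalues, \(\lmax(\bC_T)=\alpha\,\lmax(\bDelta_0^T)\), after which the logarithm splits. Your extra checks (direct ODE verification of the representation, identifying \(\bF\) with the flow-map Jacobian, and the Liouville invertibility argument, which also guarantees \(\lmax(\bDelta_0^T)>0\) so the logarithm is defined) only make explicit steps that the paper takes for granted.
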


\begin{proof}
When \(\bD\equiv\bzero\), the representation formula with nonzero initial covariance gives \(\bC_T=\bF(T,0)\bC(0)\bF(T,0)^T\).  Substituting \(\bC(0)=\alpha\bI\) gives \eqref{eq:initial_covariance_recovery}.  The matrices \(\bF\bF^T\) and \(\bF^T\bF\) have the same nonzero eigenvalues, and \(\bF^T\bF\) is the deterministic Cauchy--Green tensor.  Hence \(\lmax(\bC_T)=\alpha\lmax(\bF^T\bF)\), and \eqref{eq:ftle_recovery_constant} follows from the definitions of \(\widehat{\mu}_0^T\) and \(\sigma_0^T\).
\end{proof}

This result shows that the covariance formulation contains deterministic FTLE as a special case.  If uncertainty is introduced only as an isotropic perturbation in the initial condition, the covariance diagnostic gives the same ridge structure as deterministic FTLE.  By contrast, the process-noise setting considered in the main part of this paper uses \(\bC(0)=\bzero\) and \(\bD\neq\bzero\), so uncertainty is injected throughout the time interval rather than only at the initial time.

For process noise, the covariance at time \(T\) can be written as
\begin{equation}
    \bC_T(\bx_0)
    =
    \int_0^T
    \bF(T,s;\bx_0)\,\bD(\bx(s),s)\,\bF(T,s;\bx_0)^T\,ds .
    \label{eq:process_noise_integrated_deformation}
\end{equation}
Thus the final covariance is a time-integrated deformation tensor: uncertainty injected at time \(s\) is propagated by the deterministic deformation from \(s\) to \(T\).  If the diffusion tensor is uniformly bounded in the Loewner order along the trajectory, namely
\[
    d_-\bI \preceq \bD(\bx(t),t) \preceq d_+\bI,
    \qquad 0\leq t\leq T,
\]
with \(0\leq d_-\leq d_+\), then \eqref{eq:process_noise_integrated_deformation} implies
\begin{equation}
    d_-\int_0^T \bF(T,s)\bF(T,s)^T\,ds
    \preceq
    \bC_T(\bx_0)
    \preceq
    d_+\int_0^T \bF(T,s)\bF(T,s)^T\,ds .
    \label{eq:deformation_integral_bounds}
\end{equation}
These bounds explain why the process-noise moment-vFTLE is often strongly related to deterministic stretching, while not being identical to deterministic FTLE.  Deterministic FTLE depends on the single deformation gradient from \(0\) to \(T\), whereas \eqref{eq:process_noise_integrated_deformation} accumulates deformations from all injection times \(s\) to the final time.

\begin{proposition}[Monotonicity with respect to diffusion]\label{prop:diffusion_monotonicity}
Let \(\bC_T^{(1)}(\bx_0)\) and \(\bC_T^{(2)}(\bx_0)\) be the covariance matrices generated by two diffusion tensors \(\bD_1\) and \(\bD_2\) along the same deterministic trajectory, with zero initial covariance.  If \(\bD_1(\bx(t),t)\preceq \bD_2(\bx(t),t)\) for \(0\leq t\leq T\), then
\[
    \bC_T^{(1)}(\bx_0)\preceq \bC_T^{(2)}(\bx_0),
    \qquad
    \lmax\!\left(\bC_T^{(1)}(\bx_0)\right)
    \leq
    \lmax\!\left(\bC_T^{(2)}(\bx_0)\right).
\]
\end{proposition}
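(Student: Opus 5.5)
The plan is to work directly from the representation formula \eqref{eq:process_noise_integrated_deformation}, which is available for any diffusion tensor with zero initial covariance. Because both covariances are generated along the same deterministic trajectory \(\bx(t)=\flow{0}{t}(\bx_0)\), the velocity gradient \(\bA(t)\) is the same in both cases, and so is the fundamental matrix \(\bF(T,s;\bx_0)\). Subtracting the two representations therefore gives
\[
    \bC_T^{(2)}(\bx_0)-\bC_T^{(1)}(\bx_0)
    =
    \int_0^T \bF(T,s;\bx_0)\bigl(\bD_2(\bx(s),s)-\bD_1(\bx(s),s)\bigr)\bF(T,s;\bx_0)^T\,ds .
\]
The whole argument then reduces to showing that this difference is positive semidefinite.

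The key step is a congruence argument. Set \(\bu_s=\bF(T,s;\bx_0)^T\bn\) for an arbitrary vector \(\bn\in\R^d\). Then
\[
    \bn^T\bigl(\bC_T^{(2)}-\bC_T^{(1)}\bigr)\bn=\int_0^T \bu_s^T(\bD_2-\bD_1)\bu_s\,ds .
\]
The integrand is pointwise nonnegative by the hypothesis \(\bD_1\preceq\bD_2\) along the trajectory. The integral is well defined because \(\bF\) is continuous in \(s\) and \(\bD\) is assumed measurable and bounded along the path. Hence the integral is nonnegative for every \(\bn\), which is exactly \(\bC_T^{(1)}\preceq\bC_T^{(2)}\).

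For the eigenvalue inequality, I would use the Rayleigh quotient characterization \(\lmax(\bC)=\max_{|\bn|=1}\bn^T\bC\bn\), valid since both matrices are symmetric. Choose a unit eigenvector \(\bn\) of \(\bC_T^{(1)}\) for its top eigenvalue. Then
\[
    \lmax(\bC_T^{(1)})=\bn^T\bC_T^{(1)}\bn\le\bn^T\bC_T^{(2)}\bn\le\lmax(\bC_T^{(2)}).
\]
As an optional remark, the same bound transfers to \(\widehat{\mu}_0^T\) by monotonicity of \(\log\), whenever the eigenvalues are positive.

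There is no real obstacle here. The only point needing care is justifying that the representation formula applies to both diffusion tensors with the identical \(\bF\). This holds because \(\bF\) depends only on the deterministic flow, not on \(\bD\). Alternatively, one could argue directly from the Lyapunov ODE \eqref{eq:cov_ode}: the difference \(\bC^{(2)}-\bC^{(1)}\) satisfies the same linear equation with source \(\bD_2-\bD_1\succeq0\) and zero initial data, and variation of constants returns the same integral.
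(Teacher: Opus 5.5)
Your proposal is correct and matches the paper's proof: you subtract the two representation formulas, using the same $\bF(T,s;\bx_0)$ because it depends only on the deterministic trajectory, note that the integrand is positive semidefinite by congruence, and obtain the $\lmax$ inequality from Loewner monotonicity, which you spell out via the Rayleigh quotient where the paper simply cites it. One cosmetic point: your symbol $\bu_s$ for $\bF(T,s;\bx_0)^T\bn$ clashes with the paper's velocity field $\bu$, so a different letter would be cleaner.
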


\begin{proof}
Subtracting the two representation formulas gives
\[
    \bC_T^{(2)}-\bC_T^{(1)}
    =
    \int_0^T
    \bF(T,s)\left(\bD_2(\bx(s),s)-\bD_1(\bx(s),s)\right)\bF(T,s)^T\,ds .
\]
The integrand is positive semidefinite for every \(s\), hence the integral is positive semidefinite.  This gives the Loewner-order inequality.  The largest-eigenvalue inequality follows from monotonicity of \(\lmax\) on symmetric matrices under the Loewner order.
\end{proof}

The monotonicity result provides a basic consistency check: increasing the diffusion covariance cannot decrease the leading-order stochastic spreading.  Together with Proposition~\ref{prop:ftle_initial_covariance}, it also clarifies the role of the diffusion tensor in shaping the moment-vFTLE field.

\subsection{Relation with full Fokker--Planck vFTLE}

The moment-vFTLE is designed as a leading-order approximation to the PDF-based vFTLE in the small-noise regime.  The following result makes this connection precise at the covariance level.  The statement is local in \(\bx_0\) and assumes that the deterministic trajectory does not interact with the boundary during the time interval under consideration.

\begin{assumption}\label{ass:smoothness}
Assume that \(\bu\) and \(\bB\) are sufficiently smooth in \(\bx\) and continuous in time, with the derivatives required below uniformly bounded on the relevant part of \(\Omega\times[0,T]\).  Assume also that the deterministic trajectory \(\flow{0}{t}(\bx_0)\) remains in the interior of \(\Omega\) for \(0\leq t\leq T\).
\end{assumption}

\begin{theorem}[Leading-order covariance consistency]\label{thm:cov_consistency}
Under Assumption~\ref{ass:smoothness}, let \(\bSigma_0^T(\bx_0)\) denote the covariance matrix of the stochastic arrival location \(\bX_T\) generated by \eqref{eq:sde}.  Let \(\bC_T(\bx_0)\) be the covariance obtained from the linearized stochastic displacement, equivalently from \eqref{eq:cov_ode} or from the Eulerian pullback formula \eqref{eq:initial_indexed_cov}.  Then
\begin{equation}
    \bSigma_0^T(\bx_0)
    =
    \varepsilon^2\bC_T(\bx_0)+O(\varepsilon^3),
    \label{eq:cov_consistency}
\end{equation}
where the remainder is locally uniform in \(\bx_0\).  The constant in the \(O(\varepsilon^3)\) term may depend on \(T\), the velocity field, the diffusion coefficient, and their derivatives.
\end{theorem}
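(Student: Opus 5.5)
We would prove the theorem by a strong (pathwise, in $L^p$) small-noise expansion of the SDE \eqref{eq:sde} about the deterministic trajectory, and then read off the covariance from this expansion. Write $\bX_t=\bx(t)+\varepsilon\etaVec_t+\bR^\varepsilon_t$, where $\etaVec_t$ solves the linear SDE \eqref{eq:linearized_eta}. The goal is the moment estimate
\[
\sup_{0\le t\le T}\E|\bR^\varepsilon_t|^2\le K\varepsilon^4,\qquad \E|\bR^\varepsilon_T|\le K\varepsilon^2 ,
\]
with $K$ depending only on $T$ and the bounds in Assumption~\ref{ass:smoothness}, and uniform for $\bx_0$ in a compact set. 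Once this is in hand, the covariance statement is algebra. Since $\E\etaVec_T=\bzero$, we have $\E\bX_T=\bx(T)+\E\bR^\varepsilon_T$. Hence
\[
\bSigma_0^T=\Cov(\varepsilon\etaVec_T+\bR^\varepsilon_T)=\varepsilon^2\E[\etaVec_T\etaVec_T^T]+\varepsilon\bigl(\E[\etaVec_T\bR^{\varepsilon T}_T]+\E[\bR^\varepsilon_T\etaVec_T^T]\bigr)+\Cov(\bR^\varepsilon_T).
\]
Here $\E[\etaVec_T\etaVec_T^T]=\bC_T(\bx_0)$ by \eqref{eq:cov_ode}. The cross terms are bounded by Cauchy--Schwarz by $\varepsilon\,(\E|\etaVec_T|^2)^{1/2}(\E|\bR^\varepsilon_T|^2)^{1/2}=O(\varepsilon^3)$. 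The last term is $O(\varepsilon^4)$. The identification with the Eulerian pullback \eqref{eq:initial_indexed_cov} follows from the characteristic construction \eqref{eq:Q_definition}.

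The remainder estimate proceeds as follows. Subtracting the equations for $\bx(t)$ and $\varepsilon\etaVec_t$ from \eqref{eq:sde} gives
\[
d\bR^\varepsilon_t=\bigl[\bu(\bX_t,t)-\bu(\bx(t),t)-\varepsilon\bA(t)\etaVec_t\bigr]dt+\varepsilon\bigl[\bB(\bX_t,t)-\bB(\bx(t),t)\bigr]d\bW_t .
\]
We would Taylor expand the drift to second order. Writing $\bX_t-\bx(t)=\varepsilon\etaVec_t+\bR^\varepsilon_t$, the drift bracket equals $\bA(t)\bR^\varepsilon_t+\mathcal{O}(|\varepsilon\etaVec_t+\bR^\varepsilon_t|^2)$, while the diffusion bracket is $\mathcal{O}(|\varepsilon\etaVec_t+\bR^\varepsilon_t|)$. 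We would then apply It\^o's formula to $|\bR^\varepsilon_t|^2$, take expectations, and apply Gronwall. This uses the Gaussian moment bounds $\E|\etaVec_t|^{2p}\le K_p$, which come from \eqref{eq:cov_representation} since $\etaVec$ is Gaussian, together with the a priori bound $\E|\bX_t-\bx(t)|^{4}\le K\varepsilon^4$. The diffusion term contributes $\varepsilon^2\E|\varepsilon\etaVec+\bR^\varepsilon|^2=O(\varepsilon^4)$. The quadratic drift term contributes, after Young's inequality, $\E|\bR^\varepsilon|^2+O(\varepsilon^4)+\E|\bR^\varepsilon|^4$-type terms, which the a priori fourth-moment bound controls. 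For $\E|\bR^\varepsilon_T|\le K\varepsilon^2$ the first estimate already suffices, since $(\E|\bR^\varepsilon_T|^2)^{1/2}\le K\varepsilon^2$. If the mean correction is wanted explicitly, one can instead identify the $\varepsilon^2$ term $\boldsymbol{\zeta}_t$ of \eqref{eq:small_noise_expansion}.

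The main obstacle is that Assumption~\ref{ass:smoothness} gives bounds only on the relevant part of $\Omega$, and the stochastic path may leave it or hit the boundary. We would handle this by localization. Introduce the stopping time $\tau$, the exit time of $\bX_t$ from a tube of fixed radius $r$ around the deterministic trajectory, which lies in the interior by assumption. Run the estimates above for the stopped process on $[0,T\wedge\tau]$, where Taylor bounds hold with uniform constants. Then show $\mathbb{P}(\tau\le T)$ is small: a Freidlin--Wentzell bound gives $\exp(-c/\varepsilon^2)$, and even Chebyshev with high moments gives $O(\varepsilon^{k})$ for any $k$. The contribution of the event $\{\tau\le T\}$ to $\bSigma_0^T$ is controlled by Cauchy--Schwarz. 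This requires second moments of $\bX_T$ on that event, which hold if $\bu,\bB$ are extended globally Lipschitz (or the domain is bounded/periodic). We would state this as the interpretation of ``relevant part'' in the assumption. Local uniformity in $\bx_0$ follows because every constant depends only on $T$, $r$, and the derivative bounds on a neighborhood of the compact family of trajectories.
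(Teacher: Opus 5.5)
Your proposal is correct and follows the same route as the paper's formal proof: expand \(\bX_T\) about the deterministic trajectory, use \(\E[\etaVec_T]=\bzero\) so that the covariance equals \(\varepsilon^2\E[\etaVec_T\etaVec_T^T]\) up to cross terms of order \(\varepsilon^3\), and identify \(\E[\etaVec_T\etaVec_T^T]=\bC_T(\bx_0)\) through It\^o's product rule. The difference is that you supply what the paper only defers to ``standard SDE perturbation estimates and moment bounds.'' Specifically, you give the \(L^2\) remainder bound \(\sup_t\E|\bR^\varepsilon_t|^2\le K\varepsilon^4\), use Cauchy--Schwarz for the cross terms, and add the stopping-time localization that justifies using derivative bounds only near the interior trajectory; the paper instead manipulates \(O(\varepsilon^3)\) symbols inside expectations.
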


\begin{proof}[Formal proof]
Let \(\bx(t)=\flow{0}{t}(\bx_0)\).  From the small-noise expansion \eqref{eq:small_noise_expansion}, write \(\bX_t=\bx(t)+\varepsilon\etaVec_t+\varepsilon^2\boldsymbol{\zeta}_t+O(\varepsilon^3)\).  Matching terms of order \(\varepsilon\) gives the linearized stochastic equation \eqref{eq:linearized_eta}.  Since \(\etaVec_0=\bzero\) and the Brownian increment has zero mean, \(\E[\etaVec_t]=\bzero\).  Hence \(\E[\bX_T]=\bx(T)+O(\varepsilon^2)\), and \(\bX_T-\E[\bX_T]=\varepsilon\etaVec_T+\varepsilon^2(\boldsymbol{\zeta}_T-\E[\boldsymbol{\zeta}_T])+O(\varepsilon^3)\).  Taking the covariance gives \(\Cov(\bX_T)=\varepsilon^2\E[\etaVec_T\etaVec_T^T]+O(\varepsilon^3)\).  Finally, applying It\^o's product rule to \(\etaVec_t\etaVec_t^T\) gives the covariance equation \eqref{eq:cov_ode}, so \(\E[\etaVec_T\etaVec_T^T]=\bC_T(\bx_0)\).  This proves the stated leading-order relation formally.  A rigorous proof follows from standard SDE perturbation estimates and moment bounds under the smoothness assumptions above.
\end{proof}

\begin{corollary}[Consistency of moment-vFTLE]\label{cor:mvftle_consistency}
Assume the hypotheses of Theorem~\ref{thm:cov_consistency}.  Suppose in addition that \(\lmax(\bC_T(\bx_0))\geq c>0\) on the region of interest.  Then the PDF-based vFTLE \(v_0^T\) and the moment-based vFTLE \(\mu_0^T\) satisfy \(v_0^T(\bx_0)-\mu_0^T(\bx_0)=O(\varepsilon)\), locally uniformly in \(\bx_0\).
\end{corollary}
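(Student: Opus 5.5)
The plan is to pass from the covariance estimate of Theorem~\ref{thm:cov_consistency} to the logarithm of the largest eigenvalue, using three ingredients: Weyl's perturbation inequality for eigenvalues of symmetric matrices, the lower bound \(\lmax(\bC_T)\geq c\), and a Lipschitz bound for \(\log\) away from zero. Both diagnostics contain the prefactor \(|T|^{-1}\cdot\tfrac12\), so the claim reduces to showing
\[
    \log\lmax\!\left(\bSigma_0^T(\bx_0)\right)-\log\lmax\!\left(\varepsilon^2\bC_T(\bx_0)\right)=O(\varepsilon).
\]

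\textbf{Step 1 (rescaling).} Write \(\bSigma_0^T=\varepsilon^2\bC_T+\bR_\varepsilon\), where by \eqref{eq:cov_consistency} we have \(\|\bR_\varepsilon\|\leq K\varepsilon^3\), with \(K\) locally uniform in \(\bx_0\). Set
\[
    \widetilde{\bSigma}:=\varepsilon^{-2}\bSigma_0^T=\bC_T+\varepsilon^{-2}\bR_\varepsilon .
\]
Since \(\lmax\) is positively homogeneous, \(\lmax(\bSigma_0^T)=\varepsilon^2\lmax(\widetilde{\bSigma})\). The \(\log\varepsilon^2\) contributions therefore cancel in the difference, and it suffices to bound \(\log\lmax(\widetilde{\bSigma})-\log\lmax(\bC_T)\).

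\textbf{Step 2 (Weyl).} Both \(\widetilde{\bSigma}\) and \(\bC_T\) are symmetric: the first is a covariance matrix, and the second is symmetric by \eqref{eq:cov_representation}. Weyl's inequality then gives
\[
    |\lmax(\widetilde{\bSigma})-\lmax(\bC_T)|\leq\|\varepsilon^{-2}\bR_\varepsilon\|_2\leq K\varepsilon .
\]
For \(\varepsilon\leq\varepsilon_0:=c/(2K)\) this yields \(\lmax(\widetilde{\bSigma})\geq c/2>0\), so the logarithm of the PDF-based eigenvalue is well defined and bounded away from the singularity.

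\textbf{Step 3 (logarithm).} By the mean value theorem on \([c/2,\infty)\), \(|\log a-\log b|\leq (2/c)|a-b|\). Hence the log difference is at most \(2K\varepsilon/c\). Multiplying by \(1/(2|T|)\) gives \(|v_0^T-\mu_0^T|\leq K\varepsilon/(c|T|)\). All constants are locally uniform because \(K\) is locally uniform and \(c\) is uniform on the region of interest; for \(\varepsilon>\varepsilon_0\) the \(O(\varepsilon)\) statement is asymptotic and imposes no constraint.

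\textbf{Main obstacle.} The delicate step is the loss of two powers of \(\varepsilon\) in the rescaling. The absolute remainder is \(O(\varepsilon^3)\), while the eigenvalue itself is only of size \(\varepsilon^2 c\). Dividing by \(\varepsilon^2\) is exactly what turns the \(O(\varepsilon^3)\) covariance error into an \(O(\varepsilon)\) relative error, which is why the result is \(O(\varepsilon)\) and not better. This step relies essentially on the nondegeneracy hypothesis \(\lmax(\bC_T)\geq c\): without it the relative error could be \(O(1)\) and the logarithm could blow up. The argument also inherits the formal status of Theorem~\ref{thm:cov_consistency}, since the uniformity of \(K\) is assumed from there rather than proved here.
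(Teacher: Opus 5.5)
Your proof is correct and follows the same route as the paper: Lipschitz continuity of \(\lmax\) under symmetric perturbations (Weyl), the nondegeneracy bound \(\lmax(\bC_T)\geq c\), and the Lipschitz bound for the logarithm away from zero. Your explicit rescaling by \(\varepsilon^{-2}\) makes precise a point the paper leaves implicit: the \(\log\varepsilon^2\) terms cancel, and the logarithm is then applied to \(\lmax(\bC_T)+O(\varepsilon)\geq c/2\), which is needed because \(\varepsilon^2\lmax(\bC_T)\) itself tends to zero.
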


\begin{proof}
By Theorem~\ref{thm:cov_consistency}, \(\bSigma_0^T(\bx_0)=\varepsilon^2\bC_T(\bx_0)+O(\varepsilon^3)\).  Since the largest eigenvalue is Lipschitz continuous with respect to symmetric matrix perturbations, \(\lmax(\bSigma_0^T)=\varepsilon^2\lmax(\bC_T)+O(\varepsilon^3)\).  The lower bound on \(\lmax(\bC_T)\) ensures that the logarithm is evaluated away from zero.  Applying the map \(s\mapsto |T|^{-1}\log\sqrt{s}\) gives the result.
\end{proof}

\begin{remark}
The lower bound \(\lmax(\bC_T)\geq c>0\) is used only to avoid degeneracy in the logarithm.  At points where the leading-order covariance is zero or extremely small, the vFTLE becomes sensitive to higher-order terms and numerical regularization.  In computations, one may replace \(\lmax(\bC_T)\) by \(\lmax(\bC_T)+\delta\), with a small positive \(\delta\), when a regularized diagnostic is desired.
\end{remark}

\begin{remark}
For a spatially constant noise amplitude \(\varepsilon\), the logarithm in vFTLE introduces the additive constant \(|T|^{-1}\log\varepsilon\).  Thus the normalized moment-vFTLE \(\widehat{\mu}_0^T\) has the same ridge locations as \(\mu_0^T\) in this case.  When comparing different noise amplitudes, or when the magnitude of the uncertainty is incorporated into a spatially varying diffusion tensor, one should state explicitly whether the normalized or unnormalized diagnostic is being used.
\end{remark}

\subsection{Relation with stochastic sensitivity}

Stochastic sensitivity also describes the leading-order variance generated by small stochastic perturbations.  In the present notation, this information is contained in the covariance tensor \(\bC_T(\bx_0)\).  For any unit vector \(\bn\in\R^d\), the variance of the projected stochastic displacement in the direction \(\bn\) is \(\Var(\bn\cdot\etaVec_T)=\bn^T\bC_T(\bx_0)\bn\).  Maximizing over all unit directions gives the Rayleigh quotient identity
\[
    \sup_{\|\bn\|=1}\Var(\bn\cdot\etaVec_T)
    =
    \sup_{\|\bn\|=1}\bn^T\bC_T(\bx_0)\bn
    =
    \lmax(\bC_T(\bx_0)).
\]
Thus the scalar stochastic sensitivity and the largest eigenvalue of the covariance tensor contain closely related leading-order information.

The distinction is mainly methodological and interpretive.  In earlier vFTLE computations, stochastic sensitivity may be used as an efficient indicator to identify regions where the more expensive PDF-based vFTLE should be computed.  In the present approach, the covariance tensor is not merely a screening indicator.  It is computed directly through the Eulerian covariance transport equation and then used as the main approximation to the covariance entering vFTLE.  This also allows one to retain the full tensor information, including dominant spreading directions, and to handle anisotropic diffusion tensors naturally.

\begin{proposition}[Scalar sensitivity from the covariance tensor]
Let \(\bC_T(\bx_0)\) be the covariance of the leading-order stochastic displacement \(\etaVec_T\).  Define the scalar sensitivity by \(S^2(\bx_0)=\sup_{\|\bn\|=1}\Var(\bn\cdot\etaVec_T)\).  Then \(S^2(\bx_0)=\lmax(\bC_T(\bx_0))\).  Consequently, the normalized moment-vFTLE can be written as
\[
    \widehat{\mu}_0^T(\bx_0)
    =
    \frac{1}{|T|}
    \log\sqrt{S^2(\bx_0)} .
\]
\end{proposition}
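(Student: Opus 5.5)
The plan is to reduce the statement to the Rayleigh quotient characterization of the largest eigenvalue, applied to the symmetric positive semidefinite matrix \(\bC_T(\bx_0)\).

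First, for a fixed unit vector \(\bn\), compute the variance of the projected displacement. Since \(\etaVec_0=\bzero\) and \(\etaVec_t\) solves the linear SDE \eqref{eq:linearized_eta} with zero-mean Brownian forcing, we have \(\E[\etaVec_T]=\bzero\). Hence
\[
\Var(\bn\cdot\etaVec_T)=\E[(\bn^T\etaVec_T)^2]=\bn^T\E[\etaVec_T\etaVec_T^T]\bn=\bn^T\bC_T(\bx_0)\bn ,
\]
where the last equality uses the definition of \(\bC\), its evolution by \eqref{eq:cov_ode}, and its identification with the Eulerian pullback \eqref{eq:initial_indexed_cov} via \eqref{eq:Q_definition}.

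Second, take the supremum over the unit sphere. By the representation formula \eqref{eq:cov_representation}, \(\bC_T(\bx_0)\) is real symmetric, and it is positive semidefinite. The spectral theorem therefore gives an orthonormal eigenbasis \(\{\be_i\}\) with eigenvalues \(\lambda_i\). Expanding \(\bn=\sum_i n_i\be_i\) yields
\[
\bn^T\bC_T\bn=\sum_i\lambda_i n_i^2\le\lmax\sum_i n_i^2=\lmax .
\]
Equality holds at \(\bn=\be_{\max}\). The supremum is therefore attained and equals \(\lmax(\bC_T(\bx_0))\), so \(S^2(\bx_0)=\lmax(\bC_T(\bx_0))\).

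Finally, substitute this identity into the definition \eqref{eq:normalized_mvftle_def} of \(\widehat{\mu}_0^T\) to obtain the stated formula. Here one should note that \(S^2\ge 0\), and that the logarithm is meaningful (possibly \(-\infty\)) exactly when \(\lmax(\bC_T)\) is; the regularization remark after Corollary~\ref{cor:mvftle_consistency} applies unchanged.

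There is no real obstacle in this argument. The only points needing care are the zero-mean property of \(\etaVec_T\), which lets the variance equal the second moment, and the symmetry of \(\bC_T\), which is needed for the Rayleigh quotient identity.
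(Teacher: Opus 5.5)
Your proposal is correct and follows the same route as the paper: write \(\Var(\bn\cdot\etaVec_T)=\bn^T\bC_T(\bx_0)\bn\), maximize this Rayleigh quotient over unit vectors to obtain \(\lmax(\bC_T(\bx_0))\), and substitute into \eqref{eq:normalized_mvftle_def}. Your explicit use of \(\E[\etaVec_T]=\bzero\) makes one step visible that the paper leaves implicit, namely that the second moment \(\E[\etaVec_T\etaVec_T^T]\) defining \(\bC\) is indeed the covariance.
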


\begin{proof}
Since \(\bC_T(\bx_0)\) is symmetric positive semidefinite, the variance in direction \(\bn\) is \(\bn^T\bC_T(\bx_0)\bn\).  The maximum of this Rayleigh quotient over all unit vectors is the largest eigenvalue of \(\bC_T(\bx_0)\).  Substituting \(S^2(\bx_0)=\lmax(\bC_T(\bx_0))\) into \eqref{eq:normalized_mvftle_def} gives the stated expression.
\end{proof}

\begin{remark}
Although the scalar sensitivity \(S^2\) and the normalized moment-vFTLE contain the same largest-eigenvalue information, they emphasize different aspects of the computation.  The sensitivity \(S^2\) is a variance amplitude, while \(\widehat{\mu}_0^T\) rescales this amplitude logarithmically and by the integration time, matching the convention used in FTLE-type diagnostics.
\end{remark}

\section{Numerical Discretization and Computational Complexity}
\label{sec:numerical_method}

\subsection{Grid, notation, and splitting of the equation}

Let \(\Omega_h=\{\bx_i\}\) be a Cartesian grid and let \(t_n=n\Delta t\), \(n=0,\ldots,N_t\), with \(t_{N_t}=T\).  We write \(\bQ_i^n\approx\bQ(\bx_i,t_n)\).  The velocity \(\bu(\bx_i,t_n)\), velocity gradient \(\nabla\bu(\bx_i,t_n)\), and diffusion tensor \(\bD(\bx_i,t_n)\) are assumed to be available analytically or approximated numerically by interpolation and finite differences.

The covariance equation \eqref{eq:Q_PDE} consists of an advection part and a local source part.  The advection part is \(\partial_t\bQ+\bu\cdot\nabla\bQ=0\), and the source part is \(d\bQ/dt=(\nabla\bu)\bQ+\bQ(\nabla\bu)^T+\bD\).  This decomposition is useful because the advection step can reuse semi-Lagrangian interpolation and flow-map construction, while the source step is local at each grid point.  In the implementation used here, the tensor symmetry is enforced by evolving only the independent components of \(\bQ\).  In two dimensions these are \(q_{11},q_{12}\), and \(q_{22}\).

\subsection{Semi-Lagrangian transport step}

For a pure transport step from \(t_n\) to \(t_{n+1}\), the value at a grid point \(\bx_i\) at the new time is obtained from the departure point at the old time.  Let \(\flow{t_{n+1}}{t_n}(\bx_i)\) denote the short-time backward flow map.  Then the exact transport solution satisfies \(\bQ(\bx_i,t_{n+1})=\bQ(\flow{t_{n+1}}{t_n}(\bx_i),t_n)\).  We approximate this by
\begin{equation}
    \bQ_i^{\mathrm{adv}}
    =
    \mathcal{I}_h[\bQ^n]\!\left(\flow{t_{n+1}}{t_n}(\bx_i)\right),
    \label{eq:semi_lag_transport}
\end{equation}
where \(\mathcal{I}_h\) is a componentwise interpolation operator.  The short-time backward map can be computed using the same characteristic or Eulerian flow-map interpolation techniques used for deterministic FTLE computations.  For smooth solutions, higher-order interpolation can be used to reduce numerical diffusion; however, because interpolation is applied componentwise, positive semidefiniteness of the covariance tensor is not guaranteed exactly at the discrete level.

\subsection{Local source update}

After the transport step, the local source equation is advanced at each grid point.  Let \(\bA_i^{n+\frac12}\) and \(\bD_i^{n+\frac12}\) denote approximations of \(\nabla\bu(\bx_i,t_{n+\frac12})\) and \(\bD(\bx_i,t_{n+\frac12})\).  A simple second-order midpoint update is
\[
    \bQ_i^{n+1}
    =
    \bQ_i^{\mathrm{adv}}
    +
    \Delta t\left(
    \bA_i^{n+\frac12}\bQ_i^{\mathrm{mid}}
    +
    \bQ_i^{\mathrm{mid}}(\bA_i^{n+\frac12})^T
    +
    \bD_i^{n+\frac12}
    \right),
\]
where \(\bQ_i^{\mathrm{mid}}\) is a midpoint approximation.  In practice, \(\bQ_i^{\mathrm{mid}}\) can be obtained by a predictor step, or the source equation can be integrated using a standard second-order Runge--Kutta method.  Since the dimension \(d\) is small, this local source update is inexpensive compared with the advection and interpolation steps.

In the present work, we use this componentwise source discretization rather than a fully structure-preserving covariance integrator.  After each complete time step, we symmetrize the numerical tensor by replacing \(\bQ_i\) with \((\bQ_i+\bQ_i^T)/2\), although in the two-dimensional component formulation symmetry is already built in.  We also monitor the smallest eigenvalue of \(\bQ_i\) over the grid.  If small negative eigenvalues appear due to interpolation or time-discretization error, they can be clipped to zero for visualization; however, this correction is treated as a numerical safeguard rather than as part of the formal discretization.

\begin{remark}
The continuous covariance equation preserves positive semidefiniteness, but a componentwise Eulerian discretization does not necessarily preserve this property exactly.  One could design cone-preserving updates based on matrix exponentials or square-root variables.  We do not pursue that direction here, since the main objective of the present work is to test the moment reduction from PDF-based vFTLE to covariance transport.  Structure-preserving covariance discretizations are left as a possible future refinement.
\end{remark}

\subsection{Second-order splitting algorithm}

A second-order splitting over one time step may be written symbolically as
\[
    \bQ^{n+1}
    \approx
    \mathcal{S}_{\Delta t/2}^{n+1}
    \mathcal{T}_{\Delta t}^{n\to n+1}
    \mathcal{S}_{\Delta t/2}^{n}
    \bQ^n,
\]
where \(\mathcal{T}_{\Delta t}^{n\to n+1}\) denotes the semi-Lagrangian transport update and \(\mathcal{S}_{\Delta t/2}^{n}\), \(\mathcal{S}_{\Delta t/2}^{n+1}\) denote half-step source updates.  This mirrors the operator-splitting philosophy used in PDF-based Fokker--Planck computations, but the evolved object is now a small covariance tensor field rather than a full density for each initial condition.

\begin{algorithm}[t]
\caption{Moment-based Eulerian vFTLE computation}
\DontPrintSemicolon
\KwIn{Velocity field \(\bu\), diffusion tensor \(\bD\), grid \(\Omega_h\), time interval \([0,T]\), noise amplitude \(\varepsilon\).}
\KwOut{Moment-vFTLE field \(\mu_0^T\) and, optionally, dominant spreading directions.}
Initialize \(\bQ_i^0=\bzero\) at all grid points \(\bx_i\).\;
Compute or store the short-time flow maps required for semi-Lagrangian transport.\;
\For{\(n=0,\ldots,N_t-1\)}{
    Apply a half-step local source update using \(\nabla\bu\) and \(\bD\) near \(t_n\).\;
    Advect the independent components of \(\bQ\) by the semi-Lagrangian transport step.\;
    Apply a half-step local source update using \(\nabla\bu\) and \(\bD\) near \(t_{n+1}\).\;
    Symmetrize \(\bQ\), if necessary, and record the minimum eigenvalue over the grid.\;
}
Compute the forward flow map \(\flow{0}{T}(\bx_i)\) on the initial grid.\;
Interpolate \(\bQ(\cdot,T)\) at \(\flow{0}{T}(\bx_i)\) to obtain \(\bC_T(\bx_i)\).\;
Set \(\mu_0^T(\bx_i)=|T|^{-1}\log\sqrt{\lmax(\varepsilon^2\bC_T(\bx_i))}\).\;
Store the dominant eigenvector of \(\bC_T(\bx_i)\) if directional information is desired.\;
\end{algorithm}

\subsection{Computational complexity}

We briefly compare the computational scaling of the proposed moment method with that of a full PDF-based vFTLE computation.  Let \(M\) denote the total number of grid points in the physical domain and let \(K\) denote the number of time steps.  For simplicity, we assume that all scalar fields are discretized on the same grid and that the cost of one transport or diffusion update for one scalar field is proportional to \(M\), up to constants depending on the spatial dimension, interpolation order, and time-stepping scheme.

In the PDF-based formulation, one evolves a probability density \(p(\by,t;\bx_0,0)\) for each initial grid point \(\bx_0\).  Each density is itself a scalar field over the physical grid.  Therefore, if vFTLE is required at all \(M\) initial grid points, the computation involves \(M\) density solves, each of size \(M\), over \(K\) time steps.  The resulting cost scales as \(O(KM^2)\), up to constants associated with the advection--diffusion solver and the computation of the first and second moments.  The memory requirement can be reduced by processing the initial points one at a time, but the total work remains quadratic in \(M\).

By contrast, the moment method evolves only the independent components of the covariance tensor field \(\bQ(\bx,t)\).  Since \(\bQ\) is symmetric, the number of scalar fields is \(d(d+1)/2\).  The covariance-transport part of the computation therefore scales as
\[
    O\!\left(KM\,\frac{d(d+1)}{2}\right).
\]
There is also the cost of computing the deterministic flow map and interpolating \(\bQ(\cdot,T)\) at the arrival locations \(\flow{0}{T}(\bx_0)\).  These operations scale linearly in \(M\) per time step for standard Eulerian or semi-Lagrangian implementations.  Hence the overall scaling of the moment method remains essentially linear in the number of grid points, rather than quadratic.

In two dimensions, the covariance tensor has only three independent components, \(q_{11},q_{12}\), and \(q_{22}\).  In three dimensions, it has six independent components.  Thus the number of evolved scalar fields is fixed by the spatial dimension and does not grow with the number of initial points at which the diagnostic is evaluated.  This is the main computational advantage of the proposed method: it replaces a family of Fokker--Planck solves indexed by \(\bx_0\) with a single Eulerian tensor-transport solve.

The comparison is summarized schematically as follows.  A full PDF-based vFTLE computation evolves \(M\) density fields, each with \(M\) degrees of freedom, giving \(O(KM^2)\) work.  The moment method evolves \(d(d+1)/2\) covariance fields, each with \(M\) degrees of freedom, giving \(O(KM d^2)\) work.  The reduction is especially significant when the diagnostic is required over a dense set of initial conditions.  The trade-off is that the moment method computes only the leading-order covariance in the small-noise regime, rather than the full probability density of stochastic arrivals.

\section{Numerical Experiments}
\label{sec:numerical_experiments}

We present three numerical experiments to assess the proposed moment-based vFTLE formulation.  The first is a linear verification problem with an analytic covariance solution; it isolates the covariance component equations, source update, and eigenvalue computation from flow-map errors.  The second is the standard double-gyre benchmark with isotropic diffusion, where the normalized moment-vFTLE is compared with deterministic FTLE and additional covariance diagnostics are extracted.  The third uses the same double-gyre velocity field with spatially and temporally varying anisotropic diffusion, illustrating the directional information retained by the covariance tensor.  Throughout the experiments, we monitor the smallest covariance eigenvalue as a check that the computed tensor remains consistent with its probabilistic interpretation.

\begin{figure}[!htb]
    \centering
    \includegraphics[width=\textwidth]{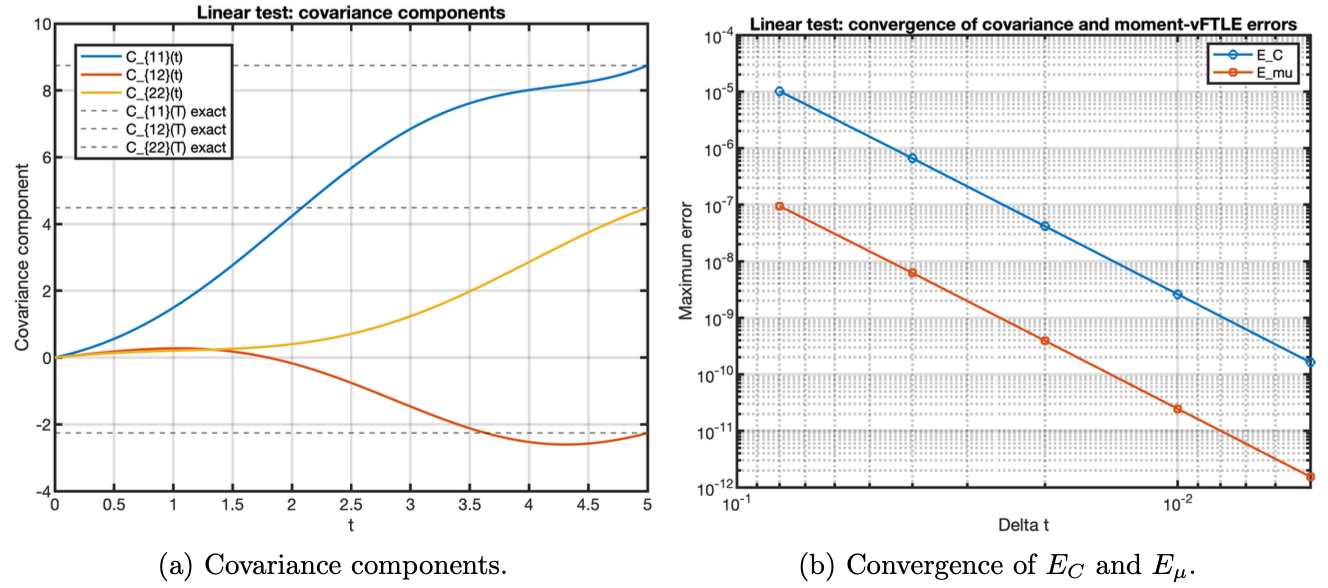}
    \caption{Linear verification test.  The covariance equation is exact for the linear velocity field with additive diffusion.  Panel (a) shows that the numerical covariance components agree with the exact final covariance values.  Panel (b) shows fourth-order convergence of both the covariance error and the moment-vFTLE error under time-step refinement.}
    \label{fig:linear_verification}
\end{figure}

\begin{figure}[!htb]
    \centering
    \includegraphics[width=\textwidth]{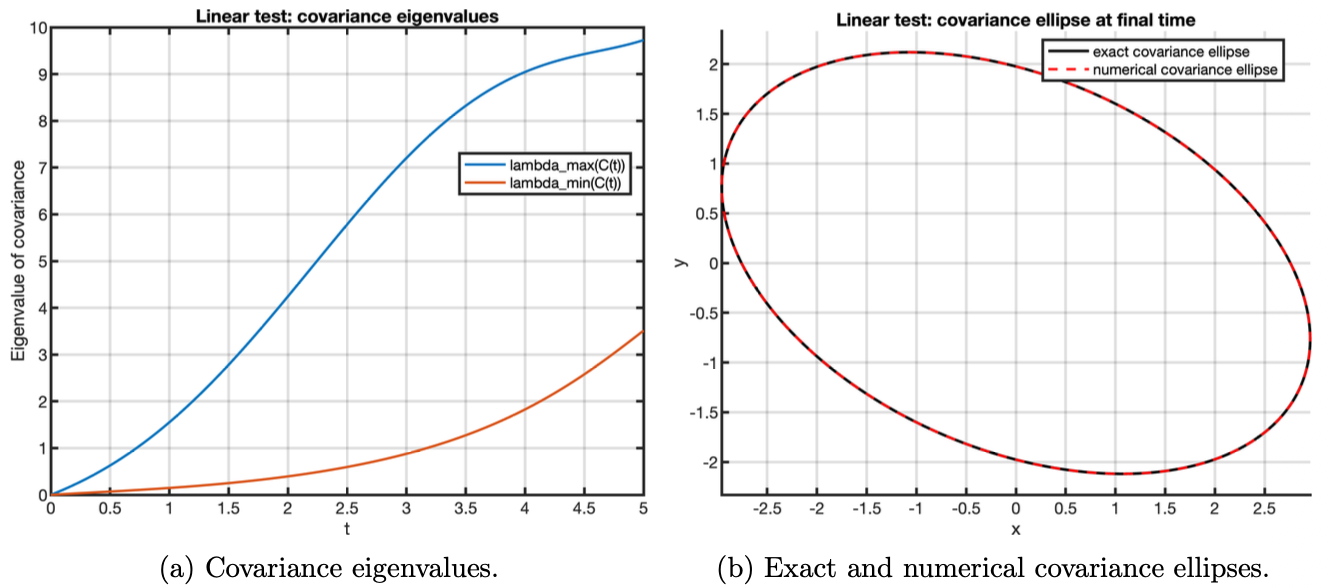}
    \caption{Additional diagnostics for the linear verification test.  The eigenvalue plot confirms positivity of the covariance matrix during the computation.  The covariance ellipse comparison verifies that the numerical tensor reproduces both the magnitude and orientation of the exact final covariance.}
    \label{fig:linear_tensor_diagnostics}
\end{figure}

\subsection{Example 1: Linear benchmark with analytic covariance}

We first consider a linear velocity field for which the covariance equation is exact and can be solved independently of any flow-map approximation.  This example verifies the component form of the covariance equation, the local source update, and the largest-eigenvalue calculation used in the moment-vFTLE diagnostic.

The velocity field is \(\bu(\bx,t)=\bA\bx\), where
\[
    \bA=
    \begin{pmatrix}
        0.30 & 1.00\\
       -0.40 & -0.10
    \end{pmatrix}.
\]
We use a constant anisotropic diffusion tensor \(\bD=\bR\operatorname{diag}(1,0.15)\bR^T\), where \(\bR\) is the rotation matrix with angle \(\pi/6\).  Since both \(\bA\) and \(\bD\) are independent of space and time, the leading-order covariance is spatially constant and satisfies \(d\bC/dt=\bA\bC+\bC\bA^T+\bD\), with \(\bC(0)=\bzero\).  This is the exact covariance equation for the corresponding linear stochastic system with additive diffusion.  Therefore, any numerical error in this test comes from the covariance source update and the eigenvalue computation, not from flow-map interpolation.

The exact covariance at time \(T\) is
\[
    \bC_{\rm exact}(T)
    =
    \int_0^T e^{(T-s)\bA}\bD e^{(T-s)\bA^T}\,ds.
\]
Equivalently, after vectorization, \(\operatorname{vec}(\bC)\) satisfies a linear inhomogeneous ODE with coefficient matrix \(\bI\otimes\bA+\bA\otimes\bI\), so the reference solution can be computed accurately by a matrix exponential.  The numerical solution evolves the three independent components \(C_{11}\), \(C_{12}\), and \(C_{22}\) using the scalar component equations derived from \(d\bC/dt=\bA\bC+\bC\bA^T+\bD\).  A fourth-order Runge--Kutta method is used for this local source equation.

We take \(T=5\), use a \(101\times101\) grid, and compare the numerical covariance with the exact covariance.  Although the grid is not needed for the exact solution, it allows us to use the same diagnostic notation as in the nonlinear examples.  The errors are measured by \(E_C=\max_{\bx_i}\|\bC_{\rm num}(T;\bx_i)-\bC_{\rm exact}(T)\|_F\) and \(E_\mu=\max_{\bx_i}|\mu_{\rm num}(\bx_i)-\mu_{\rm exact}|\).  For \(\Delta t=0.01\), the exact covariance is
\[
    \bC_{\rm exact}(T)
    =
    \begin{pmatrix}
        8.750228 & -2.261767\\
       -2.261767 &  4.490160
    \end{pmatrix}.
\]
The corresponding eigenvalues are \(\lambda_{\max}\approx9.727063\) and \(\lambda_{\min}\approx3.513325\), giving an anisotropy ratio of approximately \(2.77\).  The numerical errors are \(E_C\approx2.60\times10^{-9}\) and \(E_\mu\approx2.46\times10^{-11}\).  The smallest eigenvalue of the computed covariance over the grid is approximately \(3.51\), confirming that the covariance remains positive definite in this test.

Figure~\ref{fig:linear_verification} summarizes the verification result.  The left panel shows the time evolution of the three covariance components, with dashed horizontal lines indicating the exact final values at \(T=5\).  The right panel shows the convergence of \(E_C\) and \(E_\mu\) under time-step refinement.  The observed convergence rates are approximately fourth order: from \(\Delta t=0.08\) to \(0.005\), the estimated rates for \(E_C\) are \(3.94\), \(3.99\), \(4.00\), and \(4.00\), and the corresponding rates for \(E_\mu\) are \(3.93\), \(3.99\), \(3.99\), and \(4.00\).  This is consistent with the fourth-order Runge--Kutta update and confirms that the component covariance equations and the subsequent largest-eigenvalue computation are implemented correctly.

Figure~\ref{fig:linear_tensor_diagnostics} shows two additional tensor diagnostics.  The covariance eigenvalues remain nonnegative throughout the integration, and the numerical covariance ellipse at \(T=5\) is visually indistinguishable from the exact ellipse.  In this example the moment-vFTLE field is spatially constant, because the velocity gradient and diffusion tensor are spatially independent.  This provides an additional qualitative check: the method does not introduce artificial spatial variation when the exact diagnostic is constant.

\begin{figure}[!htb]
    \centering
        \includegraphics[width=\textwidth]{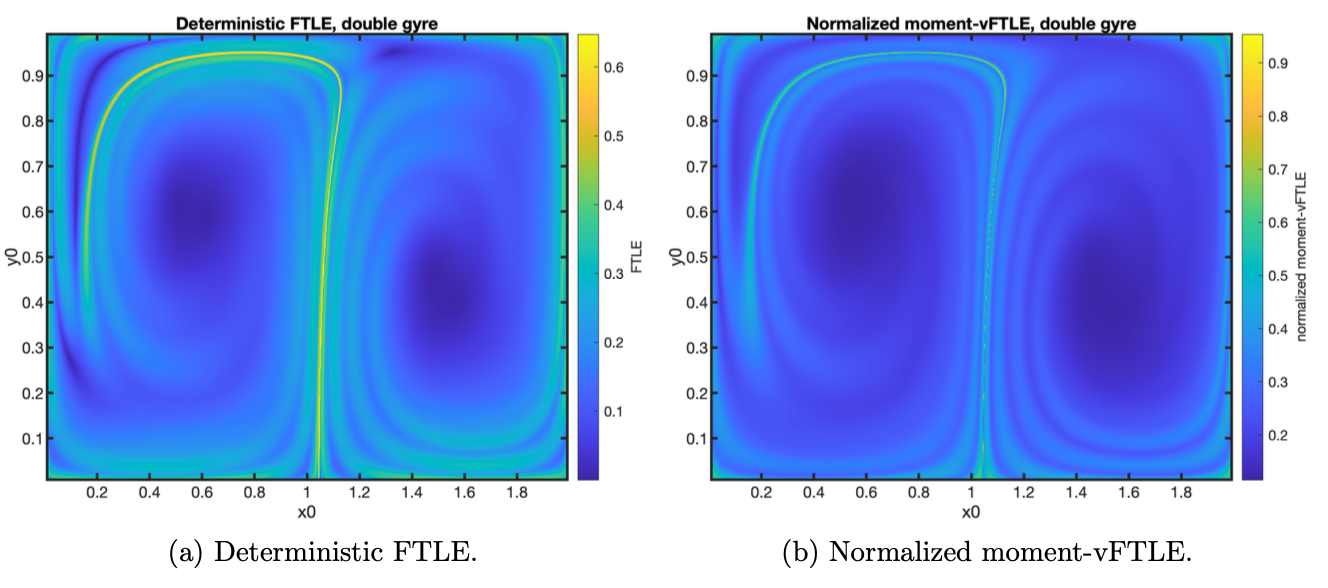}
    \caption{Double-gyre example with \(T=10\), \(\Delta t=0.02\), and a \(1024\times512\) grid.  The deterministic FTLE and the normalized moment-vFTLE reveal closely related coherent structures.  The moment-vFTLE is computed from the leading-order covariance tensor and therefore measures uncertainty-induced spreading rather than deterministic sensitivity to initial conditions.}
    \label{fig:double_gyre_fields}
\end{figure}

\begin{figure}[!htb]
    \centering
\includegraphics[width=\textwidth]{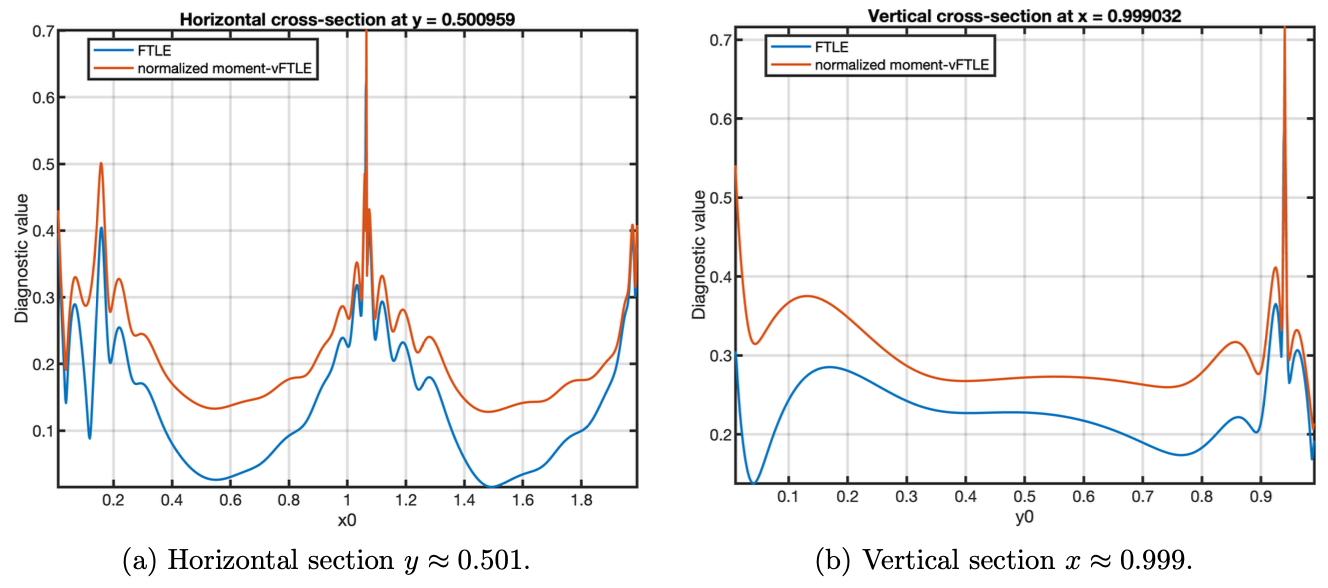}
    \caption{Cross-section comparison between deterministic FTLE and normalized moment-vFTLE.  The main peaks occur at essentially the same locations in these representative sections, while the profiles differ because the two quantities measure different forms of finite-time spreading.}
    \label{fig:double_gyre_sections}
\end{figure}

\begin{figure}[!htb]
    \centering
\includegraphics[width=\textwidth]{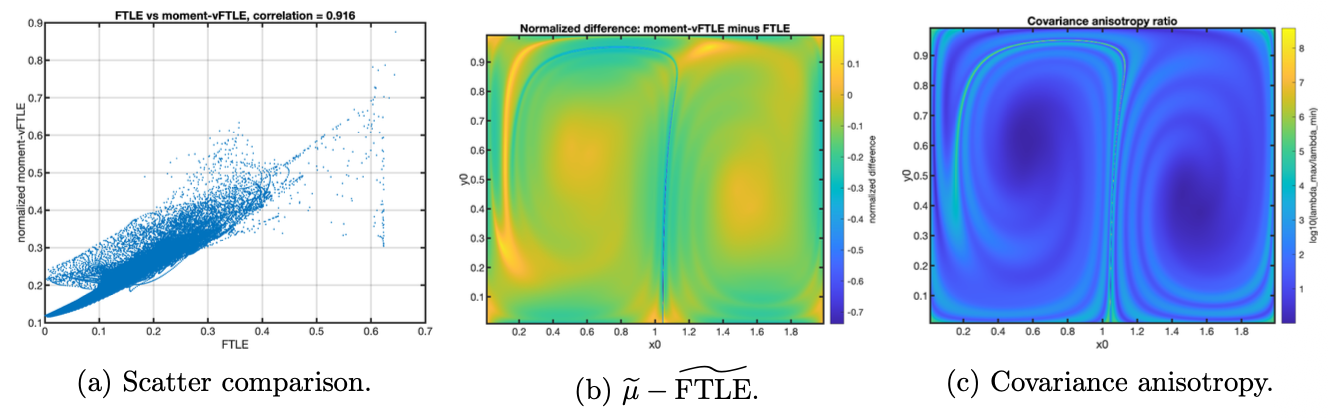}
    \caption{Additional diagnostics for the isotropic double-gyre example.  Panel (a) compares deterministic FTLE and normalized moment-vFTLE over a downsampled grid.  Panel (b) shows the difference between the two normalized scalar fields.  Panel (c) shows the anisotropy ratio of the final covariance tensor.  These diagnostics illustrate that the moment formulation gives access to tensor information beyond the scalar moment-vFTLE value.}
    \label{fig:double_gyre_additional_diagnostics}
\end{figure}

\subsection{Example 2: Double-gyre flow with isotropic diffusion}

We next consider the time-periodic double-gyre flow on the rectangular domain \(\Omega=[0.01,1.99]\times[0.01,0.99]\).  The velocity field is generated by the stream function \(\psi(x,y,t)=A\sin(\pi f(x,t))\sin(\pi y)\), where \(f(x,t)=a(t)x^2+b(t)x\), \(a(t)=\epsilon_g\sin(\omega t)\), and \(b(t)=1-2\epsilon_g\sin(\omega t)\).  The velocity is \(\bu=(-\partial_y\psi,\partial_x\psi)^T\).  In the computation we use \(A=0.1\), \(\epsilon_g=0.1\), and \(\omega=2\pi/10\), which are standard parameter values for this benchmark problem.

We take the integration time to be \(T=10\), use time step \(\Delta t=0.02\), and discretize the initial domain by a \(1024\times512\) grid.  For this validation test, we use the characteristic form of the covariance equation rather than the fully Eulerian semi-Lagrangian discretization.  That is, for each initial point \(\bx_0\), we integrate the deterministic trajectory together with
\[
    \frac{d\bC}{dt}
    =
    \bA(t)\bC+\bC\bA(t)^T+\bD(\bx(t),t),
    \qquad
    \bC(0;\bx_0)=\bzero,
\]
where \(\bA(t)=\nabla\bu(\bx(t),t)\).  This is equivalent to solving the Eulerian covariance transport equation along characteristics, but it avoids interpolation errors and therefore provides a clean verification of the moment model.  We use an isotropic diffusion tensor \(\bD=\bI\) in the leading-order covariance equation.  Since the global stochastic amplitude \(\varepsilon\) contributes only the additive constant \(|T|^{-1}\log\varepsilon\) to the vFTLE when \(\varepsilon\) is spatially constant, we plot the normalized moment-vFTLE \(\widehat{\mu}_0^T\).

The deterministic FTLE field and the normalized moment-vFTLE field are shown in Figure~\ref{fig:double_gyre_fields}.  The deterministic FTLE highlights the familiar ridge structures associated with finite-time stretching in the double-gyre flow.  The normalized moment-vFTLE produces a closely related ridge pattern, but it should not be expected to be identical to the deterministic FTLE.  Proposition~\ref{prop:ftle_initial_covariance} shows that deterministic FTLE would be recovered, up to an additive constant, if isotropic uncertainty were placed only in the initial condition.  Here, however, the covariance is generated by continuous process noise, so \eqref{eq:process_noise_integrated_deformation} shows that the diagnostic accumulates deformation from all injection times along the trajectory.  This explains why the two fields are strongly correlated while still retaining visible differences in magnitude and fine-scale structure.

The numerical ranges are summarized in Table~\ref{tab:double_gyre_summary}.  The deterministic FTLE ranges from approximately \(6.53\times10^{-7}\) to \(6.48\times10^{-1}\), while the normalized moment-vFTLE ranges from approximately \(1.15\times10^{-1}\) to \(9.54\times10^{-1}\).  On the downsampled grid used for diagnostic comparison, the Pearson correlation coefficient between the two scalar fields is approximately \(0.916\).  This high correlation confirms that the covariance-based diagnostic is strongly influenced by the same finite-time deformation mechanisms that generate deterministic FTLE ridges.  At the same time, the values are not identical because deterministic FTLE measures the amplification of initial perturbations, while the process-noise moment-vFTLE measures a time-integrated covariance response.  The minimum covariance eigenvalue is approximately \(4.97\times10^{-1}\), and the largest covariance eigenvalue reaches approximately \(1.93\times10^8\), reflecting the large range of uncertainty amplification over the domain.

\begin{table}[t]
    \centering
    \caption{Summary of the isotropic double-gyre computation.}
    \label{tab:double_gyre_summary}
    \begin{tabular}{ll}
        \hline
        Quantity & Value \\
        \hline
        Grid size & \(1024\times512\) \\
        Final time & \(T=10\) \\
        Time step & \(\Delta t=0.02\) \\
        Number of time steps & \(500\) \\
        Minimum deterministic FTLE & \(6.53\times10^{-7}\) \\
        Maximum deterministic FTLE & \(6.48\times10^{-1}\) \\
        Minimum normalized moment-vFTLE & \(1.15\times10^{-1}\) \\
        Maximum normalized moment-vFTLE & \(9.54\times10^{-1}\) \\
        Minimum \(\lambda_{\min}(\bC_T)\) & \(4.97\times10^{-1}\) \\
        Maximum \(\lambda_{\max}(\bC_T)\) & \(1.93\times10^{8}\) \\
        Maximum \(\log_{10}(\lambda_{\max}/\lambda_{\min})\) & \(8.58\) \\
        Downsampled FTLE--moment-vFTLE correlation & \(0.916\) \\
        \hline
    \end{tabular}
\end{table}

To compare the two diagnostics more directly, Figure~\ref{fig:double_gyre_sections} shows representative one-dimensional cross-sections.  Along the horizontal cross-section \(y\approx0.501\), both the deterministic FTLE and the normalized moment-vFTLE attain their maximum near \(x\approx1.065\).  Along the vertical cross-section \(x\approx0.999\), both diagnostics attain their maximum near \(y\approx0.940\).  The cross-section comparison reinforces the observation from Figure~\ref{fig:double_gyre_fields}: the moment-vFTLE identifies similar dynamically important regions, but its profile is smoother and reflects the accumulated stochastic covariance rather than only the deformation of initial perturbations.

Figure~\ref{fig:double_gyre_additional_diagnostics} gives additional information extracted from the same covariance tensor.  The scatter plot of deterministic FTLE versus normalized moment-vFTLE shows a strong positive relation, but also visible spread, consistent with the fact that the two diagnostics measure different finite-time effects.  The normalized difference field \(\widetilde{\mu}-\widetilde{\FTLE}\), where both fields are rescaled to \([0,1]\), highlights regions where covariance-based stochastic spreading is relatively stronger or weaker than deterministic stretching.  The covariance anisotropy field \(\log_{10}(\lambda_{\max}(\bC_T)/\lambda_{\min}(\bC_T))\) shows where the stochastic arrival covariance is highly elongated.  In the full-resolution computation, this quantity ranges from approximately \(9.62\times10^{-4}\) to \(8.58\), indicating that the stochastic arrival cloud can be nearly isotropic in some regions and extremely anisotropic in others.  The trace and area scale \(\sqrt{\det\bC_T}\) were also computed as total-variance and uncertainty-area diagnostics; we do not display them here to keep the main comparison focused on the largest eigenvalue and anisotropy.

This example suggests that the moment-based vFTLE captures coherent structures that are strongly related to deterministic finite-time stretching, while retaining a distinct uncertainty-based interpretation.  The diagnostic is obtained by evolving only the three independent components of the two-dimensional covariance tensor, rather than solving one Fokker--Planck equation for each initial point.  Thus the example illustrates both the computational motivation and the physical meaning of the proposed covariance moment approximation.

\begin{figure}[!htb]
    \centering
\includegraphics[width=\textwidth]{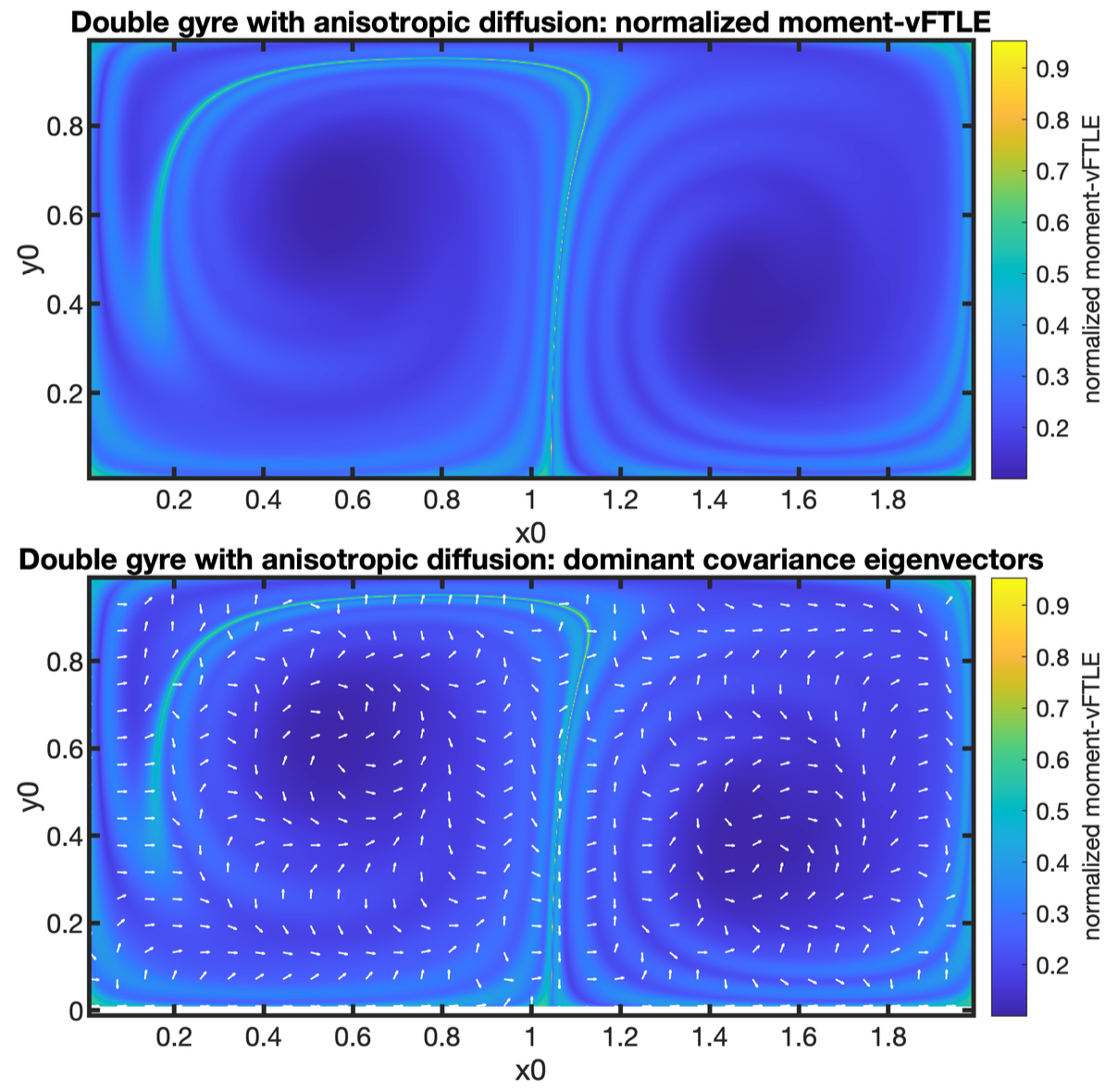}
    \caption{Double-gyre flow with spatially and temporally varying anisotropic diffusion.  Panel (a) shows the normalized moment-vFTLE computed from \(\lambda_{\max}(\bC_T)\).  Panel (b) overlays a subsampled plot of the dominant eigenvectors of \(\bC_T\) on the same scalar field.  The eigenvector directions reveal the orientation of the stochastic arrival covariance and show information that is not visible from the scalar diagnostic alone.}
    \label{fig:double_gyre_anisotropic}
\end{figure}

\begin{figure}[!htb]
    \centering
\includegraphics[width=\textwidth]{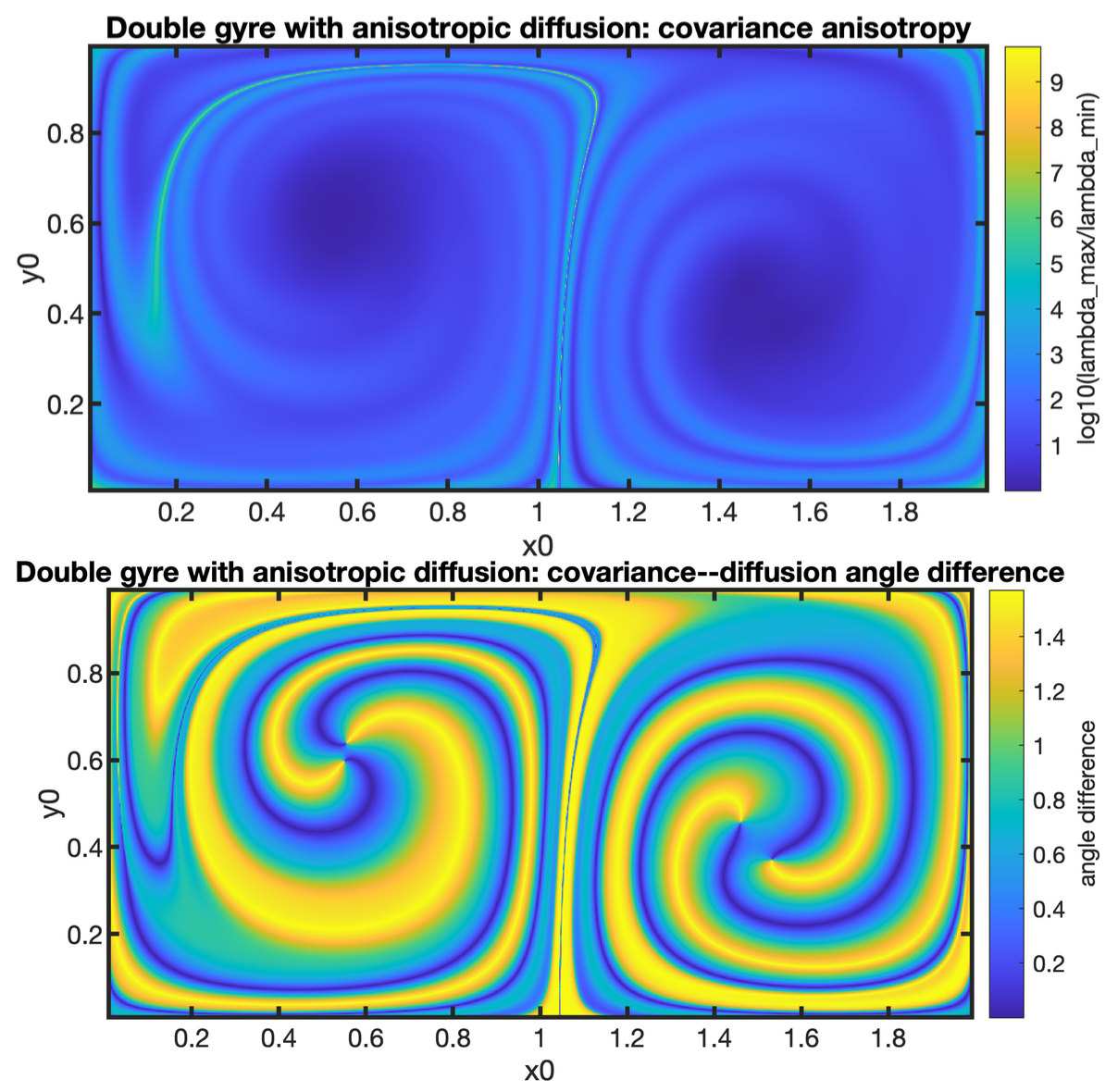}
    \caption{Additional tensor diagnostics for the anisotropic double-gyre example.  Panel (a) shows \(\log_{10}(\lambda_{\max}(\bC_T)/\lambda_{\min}(\bC_T))\), which measures the elongation of the stochastic arrival covariance.  Panel (b) shows the angle difference between the dominant covariance direction and the local dominant diffusion direction evaluated at the deterministic arrival location.  The difference between these directions demonstrates the cumulative influence of the flow on the final uncertainty orientation.}
    \label{fig:double_gyre_anisotropic_extra}
\end{figure}

\subsection{Example 3: Double-gyre flow with anisotropic spatially varying uncertainty}

We next consider the same double-gyre velocity field, but replace the isotropic diffusion tensor by a spatially and temporally varying anisotropic tensor.  This example highlights the tensor nature of the proposed method.  The scalar moment-vFTLE uses only \(\lambda_{\max}(\bC_T)\), but the full covariance tensor also provides the dominant direction of stochastic spreading through its eigenvectors.

The diffusion tensor is prescribed in the form \(\bD(\bx,t)=\bR(\theta(\bx,t))\operatorname{diag}(d_1(\bx,t),d_2(\bx,t))\bR(\theta(\bx,t))^T\), where \(\bR(\theta)\) is the two-dimensional rotation matrix.  In the computation we take \(d_1(\bx,t)=1+0.5\sin^2(\pi x)\sin^2(\pi y)\), \(d_2(\bx,t)=0.05+0.05\cos^2(\pi x)\sin^2(\pi y)\), and \(\theta(\bx,t)=(\pi/4)\sin(\pi x)\cos(\pi y)+(\pi/8)\sin(2\pi t/T)\).  Thus the diffusion tensor is uniformly positive definite, but both the strength and the principal direction of uncertainty injection vary in space and time.

The computation uses the same parameters as in the isotropic double-gyre example: \(T=10\), \(\Delta t=0.02\), and a \(1024\times512\) grid.  The covariance equation is integrated along deterministic characteristics, with \(\bD\) evaluated at the current particle location and time.  The normalized moment-vFTLE is then computed from the largest eigenvalue of \(\bC_T\).  In this run, the normalized moment-vFTLE ranges from approximately \(1.01\times10^{-1}\) to \(9.54\times10^{-1}\).  The largest covariance eigenvalue reaches approximately \(1.92\times10^8\), while the smallest covariance eigenvalue remains positive, with \(\min_{\bx_i}\lambda_{\min}(\bC_T(\bx_i))\approx2.93\times10^{-2}\).  The largest anisotropy ratio is substantially higher than in the isotropic case, with \(\max\log_{10}(\lambda_{\max}/\lambda_{\min})\approx9.78\).  Hence the computed covariance tensor remains positive definite, while also becoming highly elongated in parts of the domain.

Figure~\ref{fig:double_gyre_anisotropic} shows the resulting scalar diagnostic and the dominant covariance eigenvectors.  The top panel displays the normalized moment-vFTLE field.  As in the isotropic case, the logarithmic scaling compresses a large range of covariance magnitudes and reveals ridge-like regions of strong uncertainty-induced spreading.  The bottom panel overlays a subsampled plot of the dominant eigenvector of \(\bC_T\) on the same scalar field.  These vectors should be interpreted as directions rather than oriented arrows, since the sign of an eigenvector is arbitrary.

Figure~\ref{fig:double_gyre_anisotropic_extra} further emphasizes the tensor information carried by \(\bC_T\).  The anisotropy field shows where the stochastic arrival covariance is close to circular and where it is strongly elongated.  We also compare the dominant covariance direction with the local dominant diffusion direction evaluated at the deterministic arrival point and final time.  The resulting angle difference is interpreted modulo \(\pi\), since both eigenvectors are unoriented; it ranges from nearly zero to almost \(\pi/2\), with a broad distribution over the domain.  This confirms that the final covariance direction is not determined solely by the local diffusion tensor.  Instead, it reflects the cumulative effect of anisotropic noise injection together with flow-induced stretching and rotation along the trajectory.

This example demonstrates why it is useful to compute the full covariance tensor rather than only a scalar variance indicator.  The moment-vFTLE identifies where stochastic spreading is large, while the eigenvalues and eigenvectors indicate how the stochastic arrival cloud is shaped and oriented.  In an anisotropic diffusion problem, this orientation is determined by the combined effect of anisotropic noise injection and flow-induced stretching and rotation.  Thus the covariance formulation retains directional information that is naturally lost when the stochastic spreading is reduced immediately to a scalar field.

\section{Conclusion}
\label{sec:conclusion}

We have developed a moment-based Eulerian approximation to variance-based finite-time Lyapunov exponents for stochastic flows in the small-noise regime.  The method is motivated by the observation that vFTLE depends on the stochastic arrival distribution only through its covariance.  Rather than evolving a full Fokker--Planck density for each initial point, we derive a closed covariance equation for the leading stochastic displacement and rewrite it as an Eulerian transport--reaction equation for a symmetric tensor field.  The covariance associated with an initial point is then obtained by evaluating this tensor field at the corresponding deterministic arrival location.

The formulation reduces the main computation to the evolution of \(d(d+1)/2\) scalar covariance components in \(d\) dimensions.  It is therefore substantially cheaper than a full PDF-based vFTLE computation when the diagnostic is required over many initial conditions.  The method also retains the full covariance tensor, so that both the scalar magnitude of stochastic spreading and the dominant spreading directions are available.  This directional information is particularly useful when the diffusion tensor is anisotropic or spatially varying.

The theoretical analysis shows that the covariance produced by the moment equation agrees, to leading order in the noise amplitude, with the covariance extracted from the Fokker--Planck density.  The same covariance framework also clarifies the connection with deterministic deformation: deterministic FTLE is recovered from isotropic initial covariance in the absence of process noise, whereas continuous process noise produces a time-integrated deformation covariance.  The numerical examples support the formulation: the linear test confirms the covariance component equations against an exact solution, the isotropic double-gyre benchmark produces structures strongly related to deterministic finite-time stretching, and the anisotropic double-gyre example demonstrates the additional directional information carried by the tensor formulation.

The method should be interpreted as a leading-order covariance approximation, rather than as a replacement for the full Fokker--Planck description in all regimes.  When the arrival distribution is strongly non-Gaussian, multimodal, or when higher-order uncertainty information is required, a full density-based method remains more appropriate.  Future work includes a high-order fully Eulerian semi-Lagrangian implementation of the covariance transport equation, more systematic comparison with Monte Carlo and Fokker--Planck vFTLE computations, and extensions to three-dimensional flows where the reduction from density evolution to covariance transport is expected to be especially advantageous.

\section*{Computational Methodology and Disclosure}

This research was developed with the assistance of GPT-5.5 (Thinking), which was used to support computational exploration, symbolic manipulation, mathematical derivation, literature organization, and technical drafting. The author acted as the principal investigator throughout the project, defining the research problems, setting the theoretical scope, directing the line of inquiry, evaluating intermediate outputs, and revising the final manuscript. The use of AI in this work should be understood as part of a human-led research process. AI-generated suggestions, calculations, and derivations were treated as provisional outputs subject to human judgment, correction, and refinement. The author retains full responsibility for the mathematical content, interpretation, conclusions, and presentation of the paper. Given the exploratory nature of this AI-assisted workflow, the author welcomes independent scrutiny of the arguments, derivations, references, and conclusions presented here. This article is disseminated as a preprint on arXiv in order to invite open discussion, feedback, and further verification by the mathematical community. It has not been submitted to a peer-reviewed journal in its current form.

\bibliographystyle{plain}
\bibliography{syleung}

@string{jcp = {J. Comput. Phys.}}

@article{Higham01,
	author = {Higham, D.J.},
	journal = {SIAM J. Numer. Anal.},
	pages = {525-546},
	title = {An algorithmic introduction to numerical simulation of stochastic differential equations},
	volume = 43,
	year = 2001}

@article{Kloeden92,
	author = {Kloeden, P.E. and Platen, E.},
	journal = {Springer},
	title = {Numerical solution of stochastic differential equations},
	year = 1992}

@article{ngyouleu22,
	author = {Ng, Y.K. and You, G. and Leung, S.},
	journal = {J. Comput. and Appl. Math.},
	number = {115255},
	title = {{Sparse subsampling of flow measurements for finite-time Lyapunov exponent in domains with obstacles}},
	volume = {431},
	year = {2023}}

@article{hal15,
	author = {Haller, G.},
	journal = {Annu. Rev. Fluid Mech.},
	pages = {137-162},
	title = {{L}agrangian coherent structures},
	volume = {47},
	year = {2015}}

@article{sfrs12,
	author = {Schneider, D. and Fuhrmann, J. and Reich, W. and Scheuermann, G.},
	journal = {Topological Methods in Data Analysis and Visualization II},
	publisher = {Springer-Verlag, New York},
	title = {A Variance Based {FTLE}-Like Method for Unsteady Uncertain Vector Fields},
	year = {2012}}

@article{lywn19,
	author = {Leung, S. and You, G. and Wong, T. and Ng, Y.K.},
	journal = {Proceedings of the Seventh International Congress of Chinese Mathematicians},
	number = {2},
	pages = {579-622},
	title = {Recent Developments in {E}ulerian Approaches for Visualizing Continuous Dynamical Systems},
	year = {2019}}

@article{youleu18,
	author = {You, G. and Leung, S.},
	journal = {J. Sci. Comput.},
	number = {1},
	pages = {70-96},
	title = {Eulerian Based Interpolation Schemes for Flow Map Construction and Line Integral Computation with Applications to Coherent Structures Extraction},
	volume = {74},
	year = {2018}}

@article{youleu20,
	author = {You, G. and Leung, S.},
	journal = {J. Sci. Comput.},
	number = {32},
	title = {{Fast Construction of Forward Flow Maps using Eulerian Based Interpolation Schemes}},
	volume = {82},
	year = {2020}}

@article{youleu21,
	author = {You, G. and Leung, S.},
	journal = {J. Comput. Phys.},
	number = {109905},
	title = {Computing the finite time Lyapunov exponent for flows with uncertainties},
	volume = {425},
	year = {2021}}

@article{leu13,
	author = {Leung, S.},
	journal = {Chaos},
	number = {043132},
	title = {The Backward Phase Flow Method for the Finite Time {L}yapunov Exponent},
	volume = {23},
	year = 2013}

@article{shalekmar05,
	author = {Shadden, S.C. and Lekien, F. and Marsden, J.E.},
	journal = {Physica D},
	pages = {271-304},
	title = {Definition and Properties of {L}agrangian Coherent Structures from Finite-Time {L}yapunov Exponents in Two-Dimensional Aperiodic Flows},
	volume = 212,
	year = 2005}

@article{halyua00,
	author = {Haller, G. and Yuan, G.},
	journal = {Physica D},
	pages = {352-370},
	title = {{L}agrangian Coherent Structures and Mixing in Two-Dimensional Turbulence},
	volume = 147,
	year = 2000}

@article{hal01,
	author = {Haller, G.},
	journal = {Physica D},
	pages = {248-277},
	title = {Distinguished Material Surfaces and Coherent Structures in Three-Dimensional Fluid Flows},
	volume = 149,
	year = 2001}

@article{hal02,
	author = {Haller, G.},
	journal = {Physics of Fluid},
	pages = {1851-1861},
	title = {{L}agrangian Coherent Structures from Approximate Velocity Data},
	volume = 14,
	year = 2002}

@article{leu11,
	author = {Leung, S.},
	journal = jcp,
	pages = {3500-3524},
	title = {An {E}ulerian Approach for Computing the Finite Time {L}yapunov Exponent},
	volume = 230,
	year = 2011}

\end{document}